\definecolor{verylight}{gray}{0.97}
\definecolor{light}{gray}{0.9}
\definecolor{medium}{gray}{0.85}
\definecolor{dark}{gray}{0.6}
 \def\NZQ{\mathbb}               
 \def\NN{{\NZQ N}}
 \def\ZZ{{\NZQ Z}}
 \def\FF{{\NZQ F}}
 \def\G{{\mathcal G}}
 \def\A{{\mathcal A}}
 \def\slex{\textup{slex}}
 \def\Tor{\textup{Tor}}
 \def\indeg{\textup{indeg}}
 \def\lcm{\textup{lcm}}
 \def\0b{{\mathbf 0}}
 \def\opn#1#2{\def#1{\operatorname{#2}}} 
 \opn\chara{char} \opn\length{\ell} \opn\pd{pd} \opn\rk{rk}
 \opn\projdim{proj\,dim} \opn\injdim{inj\,dim} \opn\rank{rank}
 \opn\depth{depth} \opn\grade{grade} \opn\height{height}
 \opn\embdim{emb\,dim} \opn\codim{codim}
 \opn\Tr{Tr} \opn\bigrank{big\,rank}
 \opn\superheight{superheight}\opn\lcm{lcm}
 \opn\trdeg{tr\,deg}
 \opn\reg{reg} \opn\lreg{lreg} \opn\ini{in} \opn\lpd{lpd}
 \opn\size{size} \opn\sdepth{sdepth}
 \opn\link{link}\opn\fdepth{fdepth}\opn\lex{lex}
 \opn\tr{tr}
 \opn\type{type}
 \opn\gap{gap}
 \opn\arithdeg{arith-deg}
 \opn\HS{HS}
 \opn\tet{tet}
 \opn\div{div} \opn\Div{Div} \opn\cl{cl} \opn\Cl{Cl}
 \opn\Spec{Spec} \opn\Supp{Supp} \opn\supp{supp} \opn\Sing{Sing}
 \opn\Ass{Ass} \opn\Min{Min}\opn\Mon{Mon}
 \opn\Ann{Ann} \opn\Rad{Rad} \opn\Soc{Soc}\opn\Deg{Deg} \opn\Gen{Gen}\opn{\Im}{Im}
 \opn\Im{Im} \opn\Ker{Ker} \opn\Coker{Coker} \opn\Am{Am}
 \opn\Hom{Hom} \opn\Tor{Tor} \opn\Ext{Ext} \opn\End{End}
 \opn\Aut{Aut} \opn\id{id}
 \opn\nat{nat}
 \opn\pff{pf}
 \opn\Pf{Pf} \opn\GL{GL} \opn\SL{SL} \opn\mod{mod} \opn\ord{ord}
 \opn\Gin{Gin} \opn\Hilb{Hilb}\opn\sort{sort}
 \opn\PF{PF}\opn\Ap{Ap}
 \opn\mult{mult}
 \opn\bight{bight}
  \opn\bg{bg}
   \opn\gcl{gcl}
 \opn\aff{aff}
 \opn\relint{relint} \opn\st{st}
 \opn\lk{lk} \opn\cn{cn} \opn\core{core} \opn\vol{vol}  \opn\inp{inp} \opn\nilpot{nilpot}
 \opn\link{link} \opn\star{star}\opn\lex{lex}\opn\set{set}
 \opn\width{wd}
 \opn\Fr{F}
 \opn\QF{QF}
 \opn\G{G}
 \opn\type{type}\opn\res{res}
 \opn\conv{conv}
 \opn\Ind{Ind}
 \opn\soc{soc}
 \opn\gr{gr}
 \def\pot#1#2{#1[\kern-0.28ex[#2]\kern-0.28ex]}
 \opn\dirlim{\underrightarrow{\lim}}
 \opn\inivlim{\underleftarrow{\lim}}
 \def\Implies{\ifmmode\Longrightarrow \else
         \unskip${}\Longrightarrow{}$\ignorespaces\fi}
 \def\implies{\ifmmode\Rightarrow \else
         \unskip${}\Rightarrow{}$\ignorespaces\fi}
 \def\iff{\ifmmode\Longleftrightarrow \else
         \unskip${}\Longleftrightarrow{}$\ignorespaces\fi}
 \newtheorem{Theorem}{Theorem}[section]
 \newtheorem{Corollary}[Theorem]{Corollary}
 \newtheorem{Proposition}[Theorem]{Proposition}
 \theoremstyle{definition}
 \newtheorem{Remark}[Theorem]{Remark}
 \newtheorem{Example}[Theorem]{Example}
 \newtheorem{Definition}[Theorem]{Definition}
 \newtheorem{Crit}[Theorem]{Criterion}
 \newtheorem{Question}[Theorem]{Question}
 \let\epsilon\varepsilon
 \let\kappa=\varkappa
 \opn\dis{dis}
 \def\pnt{{\raise0.5mm\hbox{\large\bf.}}}
 \opn\Lex{Lex}
\begin{document}
 \title{Linear resolutions of $t$--spread lexsegment ideals \\ via Betti splittings}
 \author{Marilena Crupi, Antonino Ficarra}
 
 \address{Marilena Crupi, Department of Mathematics and Computer Sciences, Physics and Earth Sciences, University of Messina, Viale Ferdinando Stagno d'Alcontres 31, 98166 Messina, Italy} \email{mcrupi@unime.it}
 
 \address{Antonino Ficarra, Department of Mathematics and Computer Sciences, Physics and Earth Sciences, University of Messina, Viale Ferdinando Stagno d'Alcontres 31, 98166 Messina, Italy} \email{antficarra@unime.it}
 
 \date{}
 \subjclass[2010]{Primary 05E40, 13B25, 13D02, 16W50.}
 \keywords{monomial ideals, minimal graded resolutions, linear quotients, $t$--spread lexsegment ideals}
 \maketitle
 \begin{abstract} Let $S=K[x_1,\dots,x_n]$ be a polynomial ring in $n$ variables with coefficients over a field $K$. A $t$--spread lexsegment ideal $I$ of $S$ is a monomial ideal generated by a $t$--spread lexsegment set. We determine all $t$--spread lexsegment ideals with linear resolution by means of Betti splittings. As applications we provide formulas for the Betti numbers of such a class of ideals and furthermore we characterize all incompletely $t$--spread lexsegment ideals with linear quotients.
 \end{abstract}
 
 \section*{Introduction}
 The classification of 
 ideals with linear resolution is of great importance in commutative algebra. In general, proving
 that a graded ideal of a standard graded polynomial ring has linear resolution can be difficult. However,
 some classes of graded ideals with linear resolution may be found in \cite{AHH2, ADH, EOS, EO, DH, FC1, HMRZ, Lu2022, YP} and the reference therein. 
 Let $S=K[x_1,\dots,x_n]$ be the polynomial ring in $n$ variables with coefficients over a fixed field $K$. One method for computing Betti numbers of a monomial ideal $I$ of $S$ is to decompose $I$ into smaller monomial ideals $P$ and $Q$ such that $I = P + Q$ and the minimal set of monomial generators $G(I)$ of $I$ is the disjoint union of the minimal sets of monomial generators of $P$ and $Q$. By taking this approach in \cite{EK}, Eliahou and Kervaire introduced the notion of \emph{splittable monomial ideal} and proved that for such an ideal $I$ there exists a formula for computing its Betti numbers in terms of the Betti numbers of two suitable proper non--zero ideals $P, Q$ contained in $I$, and the Betti numbers of $P\cap Q$. More in detail, $\beta_q(I)=\beta_q(P)+\beta_q(Q)+\beta_{q-1}(P\cap Q)$, for all $q$. Verifying that an ideal $I$ is splittable is not very simple, however, there are many ways of manipulating the minimum set of monomial generators of a monomial ideal $I$ to construct smaller ideals $P$ and $Q$   such that the formula above still holds. Motivated overall by this fact afterwards Francisco, Ha and Van Tuyl have introduced the concept of \emph{Betti splitting} \cite{FHT2009}: let $I$, $P$ and $Q$ be monomial ideals of $S$ such that $G(I)$ is the disjoint union of $G(P)$ and $G(Q)$. The ideal $I=P+Q$ is said to be a \textit{Betti splitting} if
 $\beta_{i,j}(I)=\beta_{i,j}(P)+\beta_{i,j}(Q)+\beta_{i-1,j}(P\cap Q)$, for all $i, j$.  This strategy implies that one knows some information about the minimal
 resolutions of $P$, $Q$ and $P\cap Q$, and thus it is natural to analyze conditions on the Betti
 numbers of those ideals that force $I = P+Q$ to be a Betti splitting. This notion will be crucial for the development of the article. Furthermore, if $I = P+Q$ is a Betti splitting and $P\cap Q=PQ$, to compute the (graded) Betti numbers of $I$, one can take also into account the formula for the (graded) Betti numbers of the product of two ideals, see \cite[Proposition 3.1]{HMRZ}.
 
 In \cite{CAC}, the authors introduced the notion of $t$--spread lexsegment ideals.  For $t\ge 1$, this new class of monomial ideals generalizes the notion of squarefree lexsegment ideal \cite{AHH2}. A squarefree monomial ideal $I$ of $S$ is a squarefree lexsegment ideal if for all squarefree monomials $u\in I$ and all squarefree monomials $v\in S$ with $\deg(u)=\deg(v)$ and such that $v\ge_{\slex}u$, then $v\in I$. Such a notion can be generalized as follows \cite{CAC}: let $t\ge 1$. A $t$--spread ideal $I$ of $S$ is said to be a \emph{$t$--spread lexsegment ideal} if for all $t$--spread monomials $u\in I$ and all $t$--spread monomials $v\in S$ with $\deg(u)=\deg(v)$ and such that $v\ge_{\slex}u$, then $v\in I$. If $t\ge 0$ is an integer,  a monomial $u=x_{i_1}x_{i_2}\cdots x_{i_d}$ of $S$ is \textit{$t$--spread} if $i_{j+1}-i_j\ge t$, for all $j=1,\dots,d-1$ and a monomial ideal of $S$ is $t$--spread if it is generated by $t$--spread monomials \cite{EHQ}. 
 
 In \cite{HM}, a general lexsegment ideal was introduced. Such a definition has been recently generalized in \cite{FC1} by the notion of an arbitrary $t$--spread lexsegment ideal. 
 Let $t\ge 1$ and let $M_{n,d,t}$ be the set of all $t$--spread monomials of $S$ of degree $d$. Assume that $M_{n,d,t}$ is endowed with the squarefree lexicographic order $\ge_{\slex}$. Let $u,v\in M_{n,d,t}$ with $u\ge_{\slex}v$, then the set $\mathcal{L}_t(u,v)=\{w\in M_{n,d,t}:u\ge_{\slex}w\ge_{\slex}v\}$ is called  \emph{$t$--spread lexsegment set}.
 If $u$ is the maximum monomial of $M_{n,d,t}$, or $v$ is the minimum monomial of $M_{n,d,t}$, $\mathcal{L}_t(u,v)$ is said to be an \emph{initial} or a \emph{final $t$--spread lexsegment set}, respectively, and the ideal $(\mathcal{L}_t(u,v))$ is said to be an \emph{initial} or a \textit{final $t$--spread lexsegment ideal}. One can observe that the \emph{initial $t$--spread lexsegment ideal}, defined above, is the $t$--spread lexsegment ideal introduced in \cite{CAC}. Thus, in this article, we will refer to such an ideal as the \emph{initial $t$--spread lexsegment ideal}, and
 we call \emph{$t$--spread lexsegment ideal} an ideal generated by a $t$--spread lexsegment $\mathcal{L}_t(u,v)$. For $t=0$, one obtains the definition in \cite{HM}. A $t$--spread lexsegment ideal $I = (\mathcal{L}_t(u,v))$ is said to be a \emph{completely $t$--spread lexsegment ideal} if $I$ is the intersection of an initial and a final $t$--spread lexsegment ideal. Otherwise, $I$ is said to be an \emph{incompletely $t$--spread lexsegment ideal}.
 
 The aim of this article is to classify all $t$--spread lexsegment ideals with a linear resolution \emph{via} Betti splittings, where $t$ is a positive integer. The classification is achieved by analyzing the behaviour of the incompletely $t$--spread lexsegment ideals. Such a study began in \cite{FC1} using \emph{classical} techniques and all completely $t$--spread lexsegment ideals with a linear resolution were characterized.  The case $t=0$ were considered in \cite{ADH, DH}, whereas the case $t=1$ was analyzed in \cite{BS}. On one point of view, the article provides a further contribution to the more general problem of determining all monomial ideals with linear resolutions.
 
 The article is organized as follows. Section \ref{sec1} contains some notions and preliminary results that will be used in the article. We particularly focus on the notion of Betti splitting (Subsection \ref{subsection2}) and its connection with linear resolutions (Proposition \ref{prop:CharactI=P+QLinear}).  Section \ref{sec2} analyzes incompletely $t$--spread lexsegment ideals with linear resolution. Firstly, we observe the existence of a natural Betti splitting for any $t$--spread lexsegment ideal (Corollary \ref{Cor:IlinResNotCompletely}). This result together with Proposition \ref{prop:CharactI=P+QLinear} allows us to obtain all the ingredients for getting the main result in the article which characterizes all incompletely $t$--spread lexsegment ideals with a linear resolution (Theorem \ref{teor:InotCompLinRes}). Section \ref{sec3} describes some applications of the results in the previous sections. We state a formula that gives the Betti numbers of an incompletely $t$--spread lexsegment ideal $I$ of $S$ with a linear resolution (Theorem \ref{thm:Bettiformula}). The case when $I$ is a completely $t$--spread lexsegment ideal was faced and solved in \cite{FC1}. Furthermore, we characterize all incompletely $t$--spread lexsegment ideals with linear quotients, for $t\ge 1$. The case $t=0$ was considered in \cite{EOS}. Finally, Section 4 contains our conclusions and perspectives.
 
 \section{Preliminaries}\label{sec1}
 Let $K$ be a field and $S=K[x_1, \ldots, x_n]$ be the standard graded polynomial ring with coefficients in $K$. For a graded ideal $I$ of $S$, we denote by $\beta_{i,j}(I) = \dim_K \Tor_i^S(I, k)_j$ the graded Betti numbers of $I$ and by $\beta_i(I) = \sum_{j\in \ZZ}\beta_{i,j}(I)$ the Betti numbers of $I$. Given a monomial ideal $I$ of $S$, let $G(I)$ be the minimal system of monomial generators of $I$ and denote by $G(I)_d$ the set of all $u\in G(I)$ with $\deg(u)=d$, $d>0$. The lowest degree of a monomial of $G(I)$ is denoted by $\indeg(I)$. 
 
 If $I$ is generated in degree $d$, we say that $I$ has a $d$--linear resolution if $\beta_{i, i+j}(I)=0$ for every $i, j\in \NN$ and $j\neq d$. Usually, when the context is clear, we simply write that $I$ has a linear resolution. 
 
 The \emph{Castelnuovo--Mumford regularity} of a graded ideal $I$ is defined as follows:
 \[\reg(I) = \max\{j-i: \beta_{i, j}(I)\neq 0\}.\]
 A graded ideal $I$ has a $d$--linear resolution if and only if $\reg(I)=d$ and $I$ is generated in degree $d$. Moreover, if $I$ is a graded ideal of $S$ with homogeneous generators $m_1, \ldots, m_r$, then $\reg(I)\ge \deg(m_i)$, for all $i$ and, in particular, $\reg(I)\ge \indeg(I)$.
 
 Every monomial $w\in S$ can be written as $w=x_{i_1}x_{i_2}\cdots x_{i_d}$ with sorted indices $1\le i_1\le i_2\le\dots\le i_d\le n$. We define the \textit{support} of $w$ as follows:
 $$
 \supp(w)=\{j: x_j \,\mbox{divides $w$} \}= \{i_1,i_2,\dots,i_d\}.
 $$
 With the notation above,  $\min(w)=\min\big\{i:i\in\supp(u)\big\}=i_1$, $\max(w)=\max\big\{i:i\in\supp(u)\big\}= i_d$. Moreover, we set $\max(1)=\min(1)=0$.
 If $i_1< i_2< \cdots < i_d$, the monomial $w$ is said to be \textit{squarefree}.
 
 \subsection{$t$--spread lexsegment ideals}\label{subsection1}
 
 This subsection is an overview on $t$--spread lexsegment ideals. We refer to \cite{CAC, EHQ, FC1} for more details on the subject.
 \begin{Definition}
 	\rm Given $n\ge1,t\ge0$ and $u=x_{i_1}x_{i_2}\cdots x_{i_d}\in S$, with $1\le i_1\le i_2\le\ldots\le i_d\le n$, we say that $u$ is \textit{$t$--spread} if $i_{j+1}-i_j\ge t$, for all $j=1,\dots,d-1$. We say that a monomial ideal $I$ of $S$ is \textit{$t$--spread} if it is generated by $t$--spread monomials.
 \end{Definition}
 
 We note that any monomial ideal of $S$ is a $0$--spread monomial ideal and any squarefree monomial ideal of $S$ is a $1$--spread monomial ideal.
 
 Let $n,d,t\ge 1$.  We denote by $M_{n,d,t}$ the set of all $t$--spread monomials of $S$ of degree $d$. Throughout the article we tacitly assume that $n\ge 1+(d-1)t$, otherwise $M_{n,d,t}=\emptyset$. Furthermore, we assume that $M_{n,d,t}$ is endowed with the \emph{squarefree lexicographic order}, $\ge_{\slex}$ \cite{AHH2}. Recall that given $u=x_{i_1}x_{i_2}\cdots x_{i_d}, v=x_{j_1}x_{j_2}\cdots x_{j_d} \in M_{n,d,t}$, with $1\le i_1<i_2<\dots< i_d\le n$, $1\le j_1<j_2<\dots<j_d\le n$, then $u>_{\slex}v$ if
 $i_1=j_1,\ \dots,\ i_{s-1}=j_{s-1}\ \ \text{and}\ \ i_s<j_s$,
 for some $1\le s\le d$. \\
 Let $T$ be a not empty subset of $M_{n,d,t}$. We denote by $\max(T)$ ($\min(T)$, respectively) the maximum (minimum, respectively) monomial $w\in T$ with respect to $\ge_{\slex}$. It is $\max(M_{n,d,t}) = x_1x_{1+t}x_{1+2t}\cdots x_{1+(d-1)t}$ and $\min(M_{n,d,t}) = x_{n-(d-1)t}x_{n-(d-2)t}\cdots x_{n-t}x_n$.
 
 In \cite{FC1}, the following definitions have been introduced.
 \begin{Definition}\rm
 	Let $u,v\in M_{n,d,t}$, $u\ge_{\slex}v$.
 	The set
 	\[\mathcal{L}_t(u,v) = \big\{w\in M_{n,d,t}:u\ge_{\slex}w\ge_{\slex}v\big\}\]
 	is called \textit{$t$--spread lexsegment set}. The set
 	\[\mathcal{L}_t^{i}(v) = \big\{w\in M_{n,d,t}:w\ge_{\slex}v\big\}=\mathcal{L}_t(\max(M_{n,d,t}),v)\]
 	is said to be an \emph{initial $t$--spread lexsegment set} and the set
 	\[\mathcal{L}_t^{f}(u)=\big\{w\in M_{n,d,t}:w\le_{\slex}u\big\}=\mathcal{L}_t(u,\min(M_{n,d,t}))
 	\]
 	is said to be a \emph{final $t$--spread lexsegment set}.
 \end{Definition}
 
 It is clear that for $u,v\in M_{n,d,t}$ with $u\ge_{\slex}v$, then $\mathcal{L}_t(u,v)=\mathcal{L}_t^i(v)\cap\mathcal{L}_t^f(u)$.
 
 \begin{Definition}\label{def:comp}\rm
 	A $t$--spread monomial ideal $I$ of $S$ is said to be a \textit{$t$--spread lexsegment ideal} if it is generated by a $t$--spread lexsegment. \\A $t$--spread lexsegment ideal $I=(\mathcal{L}_t(u,v))$ is a \textit{completely $t$--spread lexsegment ideal} if
 	$
 	I=J\cap T,
 	$
 	where $J=(\mathcal{L}_t^i(v))$ and $T=(\mathcal{L}_t^f(u))$. Otherwise, $I$ is said to be an \textit{incompletely $t$--spread lexsegment ideal}.
 \end{Definition}
 
 One can easily observe that every initial and every final $t$--spread lexsegment ideal is completely $t$--spread lexsegment, but not all $t$--spread lexsegment ideals are completely $t$--spread lexsegment ideals (see, for instance, \cite[Example 2.5]{FC1}). 
 In \cite{FC1}, all completely $t$--spread lexsegment ideals were characterized.
 
 Now, we recall some notions that will be useful in the sequel.
 
 A $t$--spread monomial ideal $I$ is said to be \textit{$t$--spread strongly stable} if for all $t$--spread monomial $u\in I$, all $j\in \supp(u)$ and all $i< j$ such that $x_i(u/x_j)$ is $t$--spread, it follows that $x_i(u/x_j)\in I$.

 Since every initial $t$--spread lexsegment ideal $I$ of $S$ is a $t$--spread strongly stable ideal, in order to compute the graded Betti numbers of $I$ one can use the Ene, Herzog, Qureshi's formula (\cite[Corollary 1.12]{EHQ}):
 \begin{equation}
 \label{eq1}
 \beta_{i,i+j}(I)\ =\ \sum_{u\in G(I)_j}\binom{\max(u)-t(j-1)-1}{i}.
 \end{equation}
 
 A $t$--spread final lexsegment ideal $I$ of $S$ is also a $t$--spread strongly stable ideal, but with the order of the variables reversed ($x_n> x_{n-1} > \cdots >x_1$). Thus to compute the graded Betti numbers of $I$ one can use the following \emph{modified} Ene, Herzog, Qureshi's formula \cite{FC1}:
 \begin{equation}\label{eq2}
 \beta_{i,i+j}(I)\ =\ \sum_{u\in G(I)_j}\binom{n-\min(u)-t(j-1)}{i}.
 \end{equation}
 
 Let $>_{\lex}$ be the usual \textit{lexicographic order} on $S$ with $x_1>x_2>\cdots>x_n$ \cite{JT}. The next theorem collects some results from  \cite{FC1} that will be fundamental for the aim of the article.
 \begin{Theorem}\textup{\cite{FC1}}.\label{thm:compltspreadlex}
 	Given $n,d,t\ge1$, let $u=x_{i_1}x_{i_2}\cdots x_{i_d}$ and $v=x_{j_1}x_{j_2}\cdots x_{j_d}$ be $t$--spread monomials of degree $d$ of $S$ such that $u\ge_{\slex}v$. Let $I=(\mathcal{L}_t(u,v))$.
 	\begin{enumerate}\item[\em(i)] The following conditions are equivalent:
 		\begin{enumerate}
 			\item[\em(a)] $I$ is a completely $t$--spread lexsegment ideal;
 			\item[\em(b)] for every $w\in M_{n,d,t}$ with $w<_{\slex}v$ there exists an integer $s>i_1$ such that $x_s$ divides $w$ and $x_{i_1}(w/x_{s})\le_{\lex}u$. 
 		\end{enumerate}
 		\item[\em(ii)] Suppose $I$ is a completely $t$--spread lexsegment ideal with $\min(v)>\min(u)=1$. $I$ has a linear resolution if and only if one of the following conditions holds:
 		\begin{enumerate}
 			\item[\em(a)] $i_2=1+t$;
 			\item[\em(b)] $i_2>1+t$ and for the largest $w\in M_{n,d,t}$, $w<_{\slex}v$, we have $x_1(w/x_{\max(w)})$ $\le_{\lex}x_1x_{i_2-t}x_{i_3-t}\cdots x_{i_d-t}$.
 		\end{enumerate}
 		\item[\em(iii)] Suppose $I$ is a completely $t$--spread lexsegment ideal with a linear resolution. Then, for all $i\ge0$,
 		$$
 		\beta_{i}(I)=\sum_{w\in\mathcal{L}_t^f(u)}\binom{n-\min(w)-(d-1)t}{i}-\sum_{\substack{w\in\mathcal{L}_t^f(v)\\ w\ne v}}\binom{\max(w)-(d-1)t-1}{i}.
 		$$
 	\end{enumerate}
 \end{Theorem}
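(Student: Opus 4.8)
Parts (ii) and (iii) rest on the Mayer--Vietoris sequence of the decomposition $I = J \cap T$ with $J=(\mathcal{L}_t^i(v))$, $T=(\mathcal{L}_t^f(u))$, while part (i) determines exactly when this decomposition holds. For \emph{any} $t$--spread lexsegment ideal one has $I \subseteq J \cap T$, because $\mathcal{L}_t(u,v)=\mathcal{L}_t^i(v)\cap\mathcal{L}_t^f(u)$; hence $I$ is completely $t$--spread lexsegment if and only if $J \cap T \subseteq I$. Since $I$, $J$, $T$ are generated in the single degree $d$, the monomial ideal $J\cap T$ is generated by the $\lcm(g,h)$ with $g\in\mathcal{L}_t^i(v)$, $h\in\mathcal{L}_t^f(u)$, and $\lcm(g,h)$ automatically lies in $I$ unless $g>_{\slex}u$ and $h<_{\slex}v$. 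The plan for (i) is to decide, for such a pair, whether some degree--$d$ monomial of $\mathcal{L}_t(u,v)$ divides $\lcm(g,h)$: one produces a candidate by lowering one variable $x_s$ of $h$ (with $s>i_1=\min(u)$) to the small index $i_1$, available from the prefix that $g$ shares with $u$; after checking that this substitution can be carried out keeping the monomial $t$--spread and $\ge_{\slex}v$, the sole surviving obstruction is whether the result is $\le_{\lex}u$. Running this through the decisive extremal choice $g=\max(M_{n,d,t})$ yields condition (i)(b), and conversely a $w<_{\slex}v$ violating (i)(b) produces an element of $(J\cap T)\setminus I$.

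For (ii) and (iii) we use $I = J \cap T$. Since $u\ge_{\slex}v$, every $w\in M_{n,d,t}$ satisfies $w\ge_{\slex}v$ or $w\le_{\slex}u$, so $J+T=(M_{n,d,t})$; moreover $J$ and $(M_{n,d,t})=(\mathcal{L}_t^i(\min(M_{n,d,t})))$ are initial $t$--spread lexsegment ideals, hence $t$--spread strongly stable, while $T$ is a final $t$--spread lexsegment ideal. Being generated in degree $d$, formulas \eqref{eq1} and \eqref{eq2} show that $J$, $T$ and $J+T$ all have $d$--linear resolutions. Applying $\Tor_\bullet^S(-,K)$ to $0\to I\to J\oplus T\to J+T\to 0$, the standard regularity bound gives $\reg(I)\le\max\{\reg(J\oplus T),\reg(J+T)+1\}=d+1$, while $\reg(I)\ge d$ since $I$ is generated in degree $d$; thus $\reg(I)\in\{d,d+1\}$, and the long exact sequence identifies $\Tor_p^S(I,K)_{p+d+1}$ with the cokernel of $\Tor_{p+1}^S(J\oplus T,K)_{p+1+d}\to\Tor_{p+1}^S(J+T,K)_{p+1+d}$. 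Hence $I$ has a linear resolution if and only if all these maps, $p\ge 0$, are surjective. Establishing (ii) then amounts to making this surjectivity explicit on the Eliahou--Kervaire--type bases underlying \eqref{eq1}--\eqref{eq2}: the monomials $w\ge_{\slex}v$ supply the image of $J$, the only basis symbols that can be missed are those attached to the largest $w<_{\slex}v$, and chasing them turns surjectivity into the comparison of $x_1(w/x_{\max(w)})$ with $x_1x_{i_2-t}x_{i_3-t}\cdots x_{i_d-t}$; if $i_2=1+t$ the relevant shift is forced and nothing is missed, which is case (a), and otherwise one is left exactly with case (b).

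For (iii), assume $I$ has a $d$--linear resolution. In the long exact sequence in internal degree $p+d$ both $\Tor_{p+1}^S(J+T,K)_{p+d}$ and $\Tor_{p-1}^S(I,K)_{p+d}$ vanish (as $J+T$ and $I$ are $d$--linear), so one obtains the short exact sequence
\[
0\longrightarrow \Tor_p^S(I,K)_{p+d}\longrightarrow \Tor_p^S(J,K)_{p+d}\oplus\Tor_p^S(T,K)_{p+d}\longrightarrow \Tor_p^S(J+T,K)_{p+d}\longrightarrow 0 ,
\]
whence $\beta_p(I)=\beta_p(J)+\beta_p(T)-\beta_p(J+T)$ for all $p\ge 0$. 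Substituting \eqref{eq1} for $J$ and for $J+T=(M_{n,d,t})$, \eqref{eq2} for $T$, and then using the partition $M_{n,d,t}=\mathcal{L}_t^i(v)\sqcup(\mathcal{L}_t^f(v)\setminus\{v\})$ to cancel the two $\max$--sums, gives exactly the stated formula.

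The main obstacle is the combinatorial analysis in (ii): converting ``all connecting maps are surjective'' into the clean dichotomy (a)/(b) requires understanding how the minimal resolutions of $J$, $T$ and $(M_{n,d,t})$ sit inside that of $J\oplus T$ and, in particular, isolating the single monomial below $v$ that can obstruct linearity. Part (i) is delicate in a similar way — one must ensure the substitutions preserve $t$--spreadness — but is essentially bookkeeping once the reduction to the $\lcm(g,h)$ is in place, and part (iii) is a formal consequence of the exact sequence together with \eqref{eq1}--\eqref{eq2}.
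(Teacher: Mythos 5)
This paper never proves Theorem \ref{thm:compltspreadlex}: it is quoted verbatim from \cite{FC1}, so there is no in-paper proof to measure you against. Judged on its own terms, your architecture is the natural one and matches what is done in the $0$-spread case in \cite{ADH,DH}: since $\mathcal{L}_t(u,v)=\mathcal{L}_t^i(v)\cap\mathcal{L}_t^f(u)$ and $u\ge_{\slex}v$ forces $J+T=(M_{n,d,t})$, the sequence $0\to J\cap T\to J\oplus T\to J+T\to 0$ together with formulas (\ref{eq1})--(\ref{eq2}) is the right engine. Your part (iii) is complete and correct: the vanishing of $\Tor_{p+1}^S(J+T,K)_{p+d}$ and of $\Tor_{p-1}^S(I,K)_{p+d}$ does give $\beta_p(I)=\beta_p(J)+\beta_p(T)-\beta_p(J+T)$, and the cancellation over $M_{n,d,t}\setminus\mathcal{L}_t^i(v)=\mathcal{L}_t^f(v)\setminus\{v\}$ yields exactly the stated formula.

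Parts (i) and (ii), however, are strategies rather than proofs, and the missing steps are precisely where the content of the theorem lies. In (i), after reducing to pairs $g>_{\slex}u$, $h<_{\slex}v$, your candidate $x_{i_1}(h/x_s)$ is in general neither squarefree nor $t$-spread --- this is exactly why the criterion is phrased with $\le_{\lex}$ rather than $\le_{\slex}$ --- so you still owe the argument that $x_{i_1}(h/x_s)\le_{\lex}u$ produces an honest element of $\mathcal{L}_t(u,v)$ dividing $\lcm(g,h)$, and conversely that its failure for some $w<_{\slex}v$ yields a monomial of $J\cap T$ outside $I$; calling this ``bookkeeping'' understates it, and you must also dispose of generators of $J\cap T$ arising from lcm's of degree greater than $d+1$. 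In (ii), you correctly identify linearity of $I$ with surjectivity of $\Tor_{p+1}^S(J\oplus T,K)_{p+1+d}\to\Tor_{p+1}^S(J+T,K)_{p+1+d}$ for all $p$, but the entire equivalence with the dichotomy (a)/(b) --- why only the largest $w<_{\slex}v$ can obstruct, why $i_2=1+t$ forces surjectivity, and how the comparison with $x_1x_{i_2-t}x_{i_3-t}\cdots x_{i_d-t}$ falls out of the Eliahou--Kervaire-type bases underlying (\ref{eq1})--(\ref{eq2}) --- is deferred, and you flag it yourself as the main obstacle. As it stands, the proposal establishes (iii) modulo (ii) and outlines (i) and (ii) without the decisive combinatorial arguments.
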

 
 \subsection{Betti numbers and Betti splittings}\label{subsection2}
 
 In this subsection we discuss the notions of splittable monomial ideal \cite{EK} and Betti splitting \cite{FHT2009}.
 In \cite{EK}, Eliahou and Kervaire introduced the notion of \emph{splittable monomial ideal}. 
 
 \begin{Definition}\rm \label{def:splittable}A monomial ideal $I$ of $S$ is \emph{splittable} if $I$ is the sum of two non--zero monomial ideals $P$ and $Q$ of $S$ such that $G(I)$ is the disjoint union of $G(P)$ and $G(Q)$ and there is a \emph{splitting function}
 	\begin{align*}
 	G(P\cap Q)\ &\longrightarrow\ G(P)\times G(Q)\\
 	w\qquad \ &\longmapsto\ (\phi(w), \psi(w))
 	\end{align*}
 	satisfying the following properties:
 	\begin{enumerate}
 		\item[(S1)] $w = \lcm(\phi(w), \psi(w))$, for all $w\in G(P\cap Q)$,
 		\item[(S2)] for every subset $G'$ of $G(P\cap Q)$, both $\lcm(\phi(G'))$ and $\lcm(\psi(G'))$ strictly divide $\lcm(G')$.
 	\end{enumerate}
 \end{Definition}
 
 The pair $P, Q$ satisfying the above conditions is called a \emph{splitting of $I$}.
 Moreover, in \cite[Proposition 3.1]{EK} (see, also, \cite[Theorem 3.2]{HT2007}) it was proved that if $I$ is a splittable ideal with splitting $P, Q$, then for all $i, j\ge 0$, 
 \begin{equation}\label{eq1:BettiSplittingI=P+Q}
 \beta_{i,j}(I)=\beta_{i,j}(P)+\beta_{i,j}(Q)+\beta_{i-1,j}(P\cap Q),
 \end{equation}
 where $\beta_{-1,0}=0$. Hence, for all $q\ge 0$,
 \begin{equation}\label{eq:totalBettiSplittingI=P+Q}
 \beta_q(I)=\beta_q(P)+\beta_q(Q)+\beta_{q-1}(P\cap Q),
 \end{equation}
 with $\beta_{-1}=0$.
 
 From now on, if $I$ is a monomial ideal which is splittable in the sense of Eliahou and Kervaire, we will say that $I$ is \emph{Eliahou--Kervaire splittable} (E--K splittable, for short) and if $P, Q$ is a \emph{splitting of $I$}, the ideal $I=P+Q$ will be called \emph{Eliahou--Kervaire splitting} (E--K splitting, for short).
 In \cite{EK}, it was proved that every stable ideal is E--K splittable. Now, we want to verify that a $t$--spread strongly stable ideal $I$ is E--K splittable.
 
 Let $I$ be a $t$--spread strongly stable ideal  of $S=K[x_1, \ldots, x_n]$. Let $P$ be the ideal of $S$ whose minimal set of monomial generators consists of all $u\in G(I)$ such that $x_1$ divides $u$. It is clear that $P$ is a $t$--spread strongly stable ideal of $S$.
 Consider the monomial ideal $Q$ such that $G(Q)=G(I)\setminus G(P)$. One can quickly verify that $Q$ is a $t$--spread strongly stable ideal of $K[x_2, \ldots, x_n]$.\\
 Next, we prove that $W=P\cap Q$ is equal to $x_1Q$. The inclusion $W\subseteq x_1Q$ is clear. Let $v \in G(Q)$, then $x_1v\in I$. Consider the monomial $\tilde v =x_1v/x_{\min(v)}$. 
 Since $v$ is a $t$-spread monomial and $1<\min(v)$, it yields that $\tilde v$ is $t$--spread and $\tilde v\in I$ because $I$ is $t$--spread strongly stable. Hence, from \cite[Lemma 1.3]{EHQ}, $\tilde v=uy$, with $u\in G(I)$, $\max(u)<\min(y)$. It follows that $x_1$ divides $u$ and $u\in G(P)$. Thus $x_1v\in P\cap Q$ and the claim follows. Note that $G(P\cap Q) = x_1G(Q)$.
 
 Define the following map
 \begin{align*}
 G(P\cap Q)\ &\longrightarrow\ G(P)\times G(Q)\\
 w\ \qquad &\longmapsto\ (\phi(w)=u, \psi(w)=w/x_1),
 \end{align*}
 where $u$ is determined as follows: write $w= x_1v$, $v\in G(Q)$ and consider the canonical decomposition of the $t$--spread monomial $w/x_{\min(v)} = x_1v/x_{\min(v)}$, \emph{i.e.}, $w/x_{\min(v)}=uy$, with $u\in G(I)$, $\max(u)<\min(y)$ \cite[Lemma 1.3]{EHQ}. It is easy to verify that the above map is a splitting function of $I$ and thus $I=P+Q$ is an E--K splitting. On the other hand, since the ideal $Q$ in $S$ has the same (graded) Betti numbers of $Q$ in $K[x_2, \ldots, x_n]$ and $P\cap Q=x_1Q$, it follows that  
 \[
 \beta_{i,j}(I)=\beta_{i,j}(P)+\beta_{i,j}(Q)+\beta_{i-1,j}(P\cap Q)= \beta_{i,j}(P)+\beta_{i,j}(Q)+\beta_{i-1,j-1}(Q),
 \]
 for all $i, j$, and 
 \[\beta_q(I)=\beta_q(P)+\beta_q(Q)+\beta_{q-1}(Q),\,\, \textrm{for all $q$}.\] 
 Finally, in order to compute the (graded) Betti numbers of the $t$--spread strongly stable ideal  $I = P+Q$, we need only to compute the (graded) Betti numbers of $P$ and $Q$.

 The notion of splittable ideal has been used in many contexts, see \cite{FHT2009} and the reference therein. Nevertheless, the construction of the required splitting function or even to know whether such a function does exist is not easy. In \cite{FHT2009}, Francisco, Ha and Van Tuyl pointed out that there are other conditions on $P$ and $Q$ beyond the criterion of Eliahou and Kervaire that imply that formula (\ref{eq1:BettiSplittingI=P+Q}) holds.  Such situations motivated them to introduce the next definition.
 
 \begin{Definition}\rm\label{def:Bettisplitting}
 	Let $I$, $P$, $Q$ be monomial ideals of $S$ such that $G(I)$ is the disjoint union of $G(P)$ and $G(Q)$. We say that $I=P+Q$ is a \textit{Betti splitting} if
 	\begin{equation}\label{eq:BettiSplittingI=P+Q}
 	\beta_{i,j}(I)=\beta_{i,j}(P)+\beta_{i,j}(Q)+\beta_{i-1,j}(P\cap Q), \ \ \ \textup{for all}\ i,j.
 	\end{equation}
 \end{Definition}
 
 From what we have verified previously, it follows that for every $t$--spread strongly stable ideal $I$ of $S$ there exist two monomial ideals $P$, $Q$ of $S$ such that $I=P+Q$ is a \textit{Betti splitting}. 
 \begin{Example} \rm Let $$I = (x_1x_3, x_1x_4, x_1x_5, x_1x_6x_8, x_1x_6x_9, x_1x_7x_9, x_2x_4x_6x_8,x_2x_4x_6x_9,x_2x_4x_7x_9)$$
 	a $2$--spread strongly stable ideal of $S=K[x_1, \ldots, x_9]$. Setting
 	$P= (x_1x_3, x_1x_4, x_1x_5,$ $ x_1x_6x_8, x_1x_6x_9, x_1x_7x_9)$ and $Q=(x_2x_4x_6x_8,x_2x_4x_6x_9,x_2x_4x_7x_9),$ one can quickly observe that $I = P+Q$ and that $G(P)\cap G(Q)=\emptyset$, and furthermore $P\cap Q=(x_1x_2x_4x_6x_8,x_1x_2x_4x_6x_9,x_1x_2x_4x_7x_9)=x_1Q$.
 	Using \textit{Macaulay2} \cite{GDS}, the Betti tables of $I,$ $P$, $Q$, $P\cap Q$ are the following ones:
 	\begin{align*}
 	\arraycolsep=4.8pt\def\arraystretch{0.8}
 	\begin{array}{ccccccccc}
 	\begin{matrix}
 	&0&1&2&3&4\\\text{total:}&9&19&18&9&2\\
 	\text{2:}& 3&3&1&\text{.}&\text{.}\\
 	\text{3:}&3&11&15&9&2\\
 	\text{4:}&3&5&2&\text{.}&\text{.}\\	
 	\end{matrix}&&&&&&&&
 	\begin{matrix}
 	&0&1&2&3&4\\
 	\text{total:}&6&14&16&9&2\\
 	\text{2:}&3&3&1&\text{.}&\text{.}\\
 	\text{3:}&3&11&15&9&2\\\end{matrix}\\[2pt]
 	\textrm{\ \ \ \emph{Betti table of I}}&&&&&&&&\textrm{\ \ \ \emph{Betti table of P}}
 	\end{array}
 	\end{align*}
 	\begin{align*}
 	\arraycolsep=4.8pt\def\arraystretch{0.8}
 	\begin{array}{ccccccccc}
 	\begin{matrix}
 	&0&1\\
 	\text{total:}&3&2\\
 	\text{4:}&3&2 \\
 	\end{matrix}&&&&&&&&
 	\begin{matrix}
 	&0&1\\
 	\text{total:}&3&2\\
 	\text{5:}&3&2 \\
 	\end{matrix}
 	\\[4pt]
 	\textrm{\ \ \ \emph{Betti table of Q}} &&&&&&&& \textrm{\ \ \ \emph{Betti table of $P\cap Q$}}
 	\end{array}
 	\end{align*}
 	One can verify that $\beta_{i,j}(I)=\beta_{i,j}(P)+\beta_{i,j}(Q)+\beta_{i-1,j}(P\cap Q)$, for all $i, j$, \emph{i.e.}, $I=P+Q$ is a Betti splitting. Moreover, $\beta_{i,j}(I)= \beta_{i,j}(P)+\beta_{i,j}(Q)+\beta_{i-1,j-1}(Q)$, for all $i, j$.
 \end{Example}
 
 Next result will be crucial for the sequel.
 
 \begin{Proposition}\label{prop:CharactI=P+QLinear}
 	Let $I$ be a monomial ideal of $S$ generated in degree $d$. Suppose $I$ admits a Betti splitting $I=P+Q$. Then, the following conditions are equivalent:
 	\begin{enumerate}
 		\item[\rm(a)] $I$ has a $d$--linear resolution;
 		\item[\rm(b)] $P,Q$ have $d$--linear resolutions and $P\cap Q$ has a $(d+1)$--linear resolution.
 	\end{enumerate}
 \end{Proposition}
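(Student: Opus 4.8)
The plan is to derive everything from the additivity of the splitting formula
\[
\beta_{i,j}(I)=\beta_{i,j}(P)+\beta_{i,j}(Q)+\beta_{i-1,j}(P\cap Q)
\]
together with the nonnegativity of graded Betti numbers. Before starting, I would record two elementary degree facts. First, since $G(I)$ is the disjoint union of $G(P)$ and $G(Q)$ and $I$ is generated in degree $d$, we have $G(P),G(Q)\subseteq G(I)$, so all minimal generators of $P$ and of $Q$ have degree $d$; that is, both $P$ and $Q$ are generated in degree $d$. Second, every monomial of $P\cap Q$ is a multiple of $\lcm(u,v)$ for some $u\in G(P)$ and $v\in G(Q)$; since $G(P)\cap G(Q)=\emptyset$ we have $u\neq v$, and as $\deg u=\deg v=d$ this forces $\deg\lcm(u,v)\ge d+1$. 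Hence $\indeg(P\cap Q)\ge d+1$, and in particular $P\cap Q\neq(0)$, so that a $(d+1)$--linear resolution of $P\cap Q$ is a meaningful notion.

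For the implication (a) $\Rightarrow$ (b): assuming $I$ has a $d$--linear resolution means $\beta_{i,j}(I)=0$ whenever $j\neq i+d$. Substituting such a pair $(i,j)$ into the splitting formula and using that the three summands on the right-hand side are all nonnegative, I obtain $\beta_{i,j}(P)=\beta_{i,j}(Q)=0$ and $\beta_{i-1,j}(P\cap Q)=0$ for every $j\neq i+d$. Combined with the fact that $P$ and $Q$ are generated in degree $d$, the first two families of vanishings say precisely that $P$ and $Q$ have $d$--linear resolutions. Re-indexing the last family by $k=i-1$ shows $\beta_{k,j}(P\cap Q)=0$ unless $j=k+(d+1)$; taking $k=0$ shows that all minimal generators of $P\cap Q$ have degree $d+1$ (they exist since $P\cap Q\neq(0)$), so $P\cap Q$ is generated in degree $d+1$ and therefore has a $(d+1)$--linear resolution.

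For the converse (b) $\Rightarrow$ (a): from the hypotheses, $\beta_{i,j}(P)=\beta_{i,j}(Q)=0$ for all $j\neq i+d$ and $\beta_{i,j}(P\cap Q)=0$ for all $j\neq i+(d+1)$. Fix a pair $(i,j)$ with $j\neq i+d$. Then the first two vanishings give $\beta_{i,j}(P)=\beta_{i,j}(Q)=0$, while $\beta_{i-1,j}(P\cap Q)=0$ as well, because $j\neq(i-1)+(d+1)=i+d$. The splitting formula then yields $\beta_{i,j}(I)=0$ for all $j\neq i+d$, and since $I$ is generated in degree $d$ by hypothesis this is exactly the statement that $I$ has a $d$--linear resolution.

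I do not anticipate a genuine obstacle: the whole argument is the interaction of the additive splitting formula with the nonnegativity of Betti numbers, plus the two degree bounds above. The only points that require a little care are the index shift in the $\beta_{i-1,j}(P\cap Q)$ term (which is responsible for the shift from $d$--linearity to $(d+1)$--linearity) and, in the forward direction, upgrading the bound $\indeg(P\cap Q)\ge d+1$ to the statement that $P\cap Q$ is generated exactly in degree $d+1$ via the vanishing of $\beta_{0,j}(P\cap Q)$ for $j\neq d+1$.
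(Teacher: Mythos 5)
Your proof is correct. For the direction (b) $\Rightarrow$ (a) it is in substance the same as the paper's: the paper phrases the argument in terms of the free modules $F_i=F_i^P\oplus F_i^Q\oplus F_{i-1}^{P\cap Q}$ all being generated in degree $i+d$, which is exactly your observation that $\beta_{i,j}(P)=\beta_{i,j}(Q)=\beta_{i-1,j}(P\cap Q)=0$ for $j\neq i+d$ forces $\beta_{i,j}(I)=0$ there. The genuine difference is in (a) $\Rightarrow$ (b): the paper outsources this direction entirely to \cite[Proposition 3.1]{DB}, whereas you give a short self-contained argument using the nonnegativity of graded Betti numbers (so each summand in the splitting formula must vanish off the linear strand) together with the two degree observations that $G(P),G(Q)\subseteq G(I)$ and that $\indeg(P\cap Q)\ge d+1$, the latter upgraded to exact generation in degree $d+1$ via the vanishing of $\beta_{0,j}(P\cap Q)$. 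This buys independence from the external reference at essentially no cost, and the index shift you track is precisely what accounts for the passage from $d$-linearity to $(d+1)$-linearity. The only implicit hypothesis you rely on is that $P$ and $Q$ are both non-zero, so that $P\cap Q\neq 0$ and a $(d+1)$-linear resolution of it is a meaningful condition; this is tacit in the notion of a splitting and holds in every application in the paper, so it is not a gap in practice, though it would be worth stating.
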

 \begin{proof}
 	(a) $\Rightarrow$ (b). It follows from \cite[Proposition 3.1]{DB}.\\
 	(b) $\Rightarrow$ (a). Let $\FF_P$, $\FF_Q$, $\FF_{P\cap Q}$ be the minimal graded free resolutions of the monomial ideals $P$, $Q$, $P\cap Q$, respectively. Furthermore, let $F_i^H$ be the $i$--th free module in the resolution $\FF_H$, $H\in\{P,Q,P\cap Q\}$. Since $I=P+Q$ is a Betti splitting, from (\ref{eq:BettiSplittingI=P+Q}), it follows that the $i$--th free module in the minimal graded free resolution of $I$ is the following one:
 	\begin{equation}\label{eq:PresenPQPcapQI}
 	F_i=F_i^P\oplus F_i^Q\oplus F_{i-1}^{P\cap Q},\,\, i\ge 0.
 	\end{equation}
 	On the other hand, by the hypothesis $P$ and $Q$ have $d$--linear resolutions, whereas $P\cap Q$ has a $(d+1)$--linear resolution. Thus $F_i^P= S(-i-d)^{\beta_i(P)}$, $F_i^Q= S(-i-d)^{\beta_i(Q)}$ and $F_{i-1}^{P\cap Q}= S(-(i-1)-(d+1))^{\beta_i(P\cap Q)} = S(-i-d)^{\beta_i(P\cap Q)}$, for all $i$. Hence, $F_i =S(-i-d)^{\beta_i(I)}$, for all $i$, and $I$ has a $d$--linear resolution.	
 \end{proof}
 
 In \cite{FHT2009}, some criteria to detect a Betti splitting were given. For later use, we recall the following one.
 \begin{Crit}\label{Lem:BettiSplitxiV2}
 	\textup{\cite[Corollary 2.7]{FHT2009}} Let $I$ be a monomial ideal of $S$. Let $P$ be the ideal generated
 	by all elements of $G(I)$ divisible by $x_i$ and let $Q$ be the ideal generated by all other elements of $G(I)$. If the ideal $P$ has a linear resolution, then $I=P+Q$ is a Betti splitting.
 \end{Crit}
 
 Next example \cite{HH2014} will describe a squarefree monomial ideal ($1$--spread ideal) with a Betti splitting and a linear resolution. For a non negative integer $n$, we set $[n]=\{1,\dots,n\}$. Given a non empty subset $A\subseteq[n]$, we set ${\bf x}_A=\prod_{i\in A}x_i$. Moreover, we set ${\bf x}_{\emptyset}=1$.
 
 \begin{Example} \rm Let $\Delta$ be a simplicial complex on the vertex set $[n]=\{1,\dots,n\}$. In \cite{HH2014}, Herzog and Hibi attached to $\Delta$ a special squarefree monomial ideal: let $R=K[{\bf x},{\bf y}]=K[x_1,\dots,x_n,y_1,\dots,y_n]$ be a polynomial ring in $2n$ variables with $K$ a field. Given a subset $F$ of $[n]$, define the squarefree monomial $u_F={\bf x}_F{\bf y}_{[n]\setminus F}=(\prod_{i\in F}x_i)(\prod_{i\in[n]\setminus F}y_i)$. Then, the \textit{face ideal} of $\Delta$ is the squarefree monomial ideal $J_\Delta=(u_F:F\in\Delta)$. Note that $J_{\Delta}$ is generated in degree $n$. In \cite[Corollary 1.2]{HH2014}, the authors proved that $J_\Delta$ has a $n$--linear resolution. By using the notion of Betti splitting one can recover such a property. We proceed by induction on $n$. For $n=1$, $\Delta=\{\emptyset,\{1\}\}$ and $J_{\Delta}=(u_{\emptyset},u_{\{1\}})=(x_1,y_1)$ has a linear resolution. Let $n>1$ and consider the simplicial complex on the vertex set $[n-1]$ defined by
 	$$
 	\Gamma=\big\{F\setminus\{n\}:F\in\Delta\big\}=\big\{F\in\Delta:n\notin F\big\}.
 	$$
 	We can write $J_\Delta=P+Q$, with $P=(w\in G(J_\Delta):x_n\ \textup{divides}\ w)$ and $Q$ such that $G(Q) = G(J_\Delta)\setminus G(P)$. Note that both $P=x_nJ_{\Gamma}$ and $Q=y_nJ_{\Gamma}$. By induction, $J_{\Gamma}$ has a $(n-1)$--linear resolution, thus $P$ and $Q$ have $n$--linear resolutions and $P\cap Q=x_ny_n J_{\Gamma}$ has an $(n+1)$--linear resolution. By Criterion \ref{Lem:BettiSplitxiV2}, $J_\Delta=P+Q$ is a Betti splitting, and by Proposition \ref{prop:CharactI=P+QLinear} we conclude that $J_{\Delta}$ has a $n$--linear resolution.
 \end{Example}
 
 Not all ideals with linear resolution admit a Betti splitting as \cite[Example 4.6]{DB} illustrates.
 
 \section{$t$--spread lexsegment ideals with linear resolution}\label{sec2}
 The purpose of this section is to classify all $t$--spread lexsegment ideals with a linear resolution.  We focus on the class of incompletely $t$--spread lexsegment ideals  since a characterization of all completely $t$--spread lexsegment ideals with a linear resolution was given in \cite[Theorem 4.4]{FC1}. The notion of Betti splitting will play a fundamental role.\medskip
 
 In order to keep the paper self--contained, we recall some comments and remarks from \cite{FC1}.
 Let $\mathcal{L}_t(u,v)$ be a $t$--spread lexsegment set and let $I=(\mathcal{L}_t(u,v))$. We may always assume
 that $u\ne v$. Indeed, a principal ideal has a linear resolution. We suppose also that $\deg(u)=\deg(v)>1$, otherwise $I$ is generated by variables and it has a linear resolution. Furthermore, we may assume that $x_1$ divides $u$. Indeed, if $\min(u)>1$, then the sequence $x_1,\dots,x_{\min(u)-1}$ is regular on $S/I$, and the graded Betti numbers of $I$ are the same of $I'=I\cap K[x_{\min(u)},x_{\min(u)+1},\dots,x_n]$. Hence, $I$ has a linear resolution if and only if $I'$ has a linear resolution. Moreover, we may assume $\min(u)<\min(v)$. Indeed, if $\min(u)=\min(v)$, then $I$ has a linear resolution if and only if $I''=(\mathcal{L}_t(u/x_{\min(u)},v/x_{\min(v)}))$ has a linear resolution. Finally, we assume that $I$ is not initial $t$--spread lexsegment neither final $t$--spread lexsegment because  in such cases $I$ has a linear resolution \cite[Theorem 1.4]{EHQ}.
 
 Criterion \ref{Lem:BettiSplitxiV2} allows us to state that there does exist a Betti splitting for a $t$--spread lexsegment ideal.
 
 \begin{Corollary}\label{Cor:IlinResNotCompletely}
 	Let $u,v\in M_{n,d,t}$ with $\min(u)=1$ and $\min(v)>1$ and $I=(\mathcal{L}_t(u,v))$ be a $t$--spread lexsegment ideal of $S$. Then there exists a Betti splitting of $I$. 
 \end{Corollary}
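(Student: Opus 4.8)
The natural approach is to invoke Criterion \ref{Lem:BettiSplitxiV2} with the variable $x_1$. I would let $P$ be the ideal generated by the elements of $G(I)=\mathcal{L}_t(u,v)$ that are divisible by $x_1$, and let $Q$ be the ideal generated by the remaining elements of $G(I)$. Since $\min(u)=1$ we have $u\in G(P)$, and since $\min(v)>1$ we have $v\in G(Q)$, so both $P$ and $Q$ are nonzero, $I=P+Q$, and $G(P),G(Q)$ are disjoint. By Criterion \ref{Lem:BettiSplitxiV2}, it then suffices to prove that $P$ has a linear resolution.

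To this end I would first describe $G(P)$ explicitly. A $t$--spread monomial $w$ of degree $d$ lies in $G(P)$ precisely when $\min(w)=1$ and $v\le_{\slex}w\le_{\slex}u$; but any $t$--spread monomial with $\min(w)=1$ automatically satisfies $w>_{\slex}v$ because $\min(v)>1$, so this reduces to the two conditions $\min(w)=1$ and $w\le_{\slex}u$. Writing $u=x_1\,(u/x_1)$ and $w=x_1w'$, the inequality $w\le_{\slex}u$ is equivalent to $w'\le_{\slex}u/x_1$; moreover $w'\le_{\slex}u/x_1$ forces $\min(w')\ge\min(u/x_1)\ge 1+t$ (the last inequality because $u$ is $t$--spread with $\min(u)=1$), and hence $x_1w'$ is automatically $t$--spread. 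Therefore $G(P)=\{x_1w':w'\in\mathcal{L}_t^f(u/x_1)\}$, that is, $P=x_1J$ where $J=(\mathcal{L}_t^f(u/x_1))$ is the final $t$--spread lexsegment ideal in degree $d-1$ determined by $u/x_1$.

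Finally I would conclude as follows. We may assume $d\ge 2$, the case $d=1$ being immediate since then $I$ is generated by variables. No generator of $J$ is divisible by $x_1$, so $P=x_1J\cong J(-1)$ and thus $\beta_{i,j}(P)=\beta_{i,j-1}(J)$ for all $i,j$; in particular $P$ has a $d$--linear resolution if and only if $J$ has a $(d-1)$--linear resolution. By \cite[Theorem 1.4]{EHQ} every final $t$--spread lexsegment ideal has a linear resolution, so $J$, and hence $P$, has a linear resolution. Criterion \ref{Lem:BettiSplitxiV2} then yields that $I=P+Q$ is a Betti splitting, as desired.

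I do not expect a genuine obstacle here. The only step requiring care is showing that factoring out $x_1$ turns $P$ into a final $t$--spread lexsegment ideal; this rests on two small observations, namely that the hypothesis $\min(v)>1$ removes the lower endpoint constraint from the lexsegment defining $P$, and that $w'\le_{\slex}u/x_1$ forces $\min(w')\ge 1+t$, so that the $t$--spread property is preserved under multiplication by $x_1$. Everything else is a direct application of Criterion \ref{Lem:BettiSplitxiV2} together with the linearity of final $t$--spread lexsegment ideals.
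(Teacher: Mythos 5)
Your proposal is correct and follows essentially the same route as the paper: the same splitting into $P$ (generators divisible by $x_1$) and $Q$ (the rest), the same identification $P=x_1(\mathcal{L}_t^f(u/x_1))$, the appeal to \cite[Theorem 1.4]{EHQ} for the linearity of the final $t$--spread lexsegment ideal, and the conclusion via Criterion \ref{Lem:BettiSplitxiV2}. Your verification that multiplication by $x_1$ preserves the $t$--spread property is a detail the paper leaves implicit, but it is the same argument.
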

 \begin{proof} Set 
 	$$P=(\mathcal{L}_t(u,x_1x_{n-(d-2)t}\cdots x_{n-t}x_{n}))=x_1(\mathcal{L}_t^f(u/x_{1}))$$
 	and 
 	$$Q=(\mathcal{L}_t(x_2x_{2+t}\cdots x_{2+(d-1)t},v)).$$ 
 	One can observe that 
 	$P=(w\in G(I):x_1\ \textup{divides}\ w)$. It follows that $P$  has a linear resolution if and only if the ideal $(\mathcal{L}_t^f(u/x_1))$ considered in $K[x_2, \ldots,$ $x_n]$ has a linear resolution. Since $(\mathcal{L}_t^f(u/x_1))$ a $t$--spread final lexsegment ideal of the polynomial ring $K[x_2, \ldots,$ $x_n]$, the assertion follows from \cite[Theorem 1.4]{EHQ}. Hence, from Criterion \ref{Lem:BettiSplitxiV2}, $I=P+Q$ is a Betti splitting.
 \end{proof}
 
 \begin{Remark}\label{rem:PQlinear} \rm Note that the ideal $Q$ in Corollary \ref{Cor:IlinResNotCompletely} has a $d$--linear resolution. Indeed, $Q$ is an initial $t$--spread lexsegment of the polynomial ring $K[x_2, \ldots,x_n]$. Hence, Corollary \ref{Cor:IlinResNotCompletely} can be obtained also by \cite[Corollary 2.4]{FHT2009}.
 \end{Remark}
 
 \begin{Corollary}\label{Prop:BettiBol}
 	Let $u,v\in M_{n,d,t}$ with $\min(u)=1$ and $\min(v)>1$ and $I=(\mathcal{L}_t(u,v))$ be a $t$--spread lexsegment ideal of $S$. 
 	Set $P=(\mathcal{L}_t(u,x_1x_{n-(d-2)t}\cdots x_{n-t}x_{n}))$ and $Q=(\mathcal{L}_t(x_2x_{2+t}\cdots x_{2+(d-1)t},v))$.
 	The following statements are equivalent:
 	\begin{enumerate}
 		\item[\em(i)] $I$ has a $d$--linear resolution;
 		\item[\em(ii)] $P\cap Q$ has a $(d+1)$--linear resolution.
 	\end{enumerate}
 \end{Corollary}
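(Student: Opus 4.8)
The plan is to deduce the statement directly from Proposition \ref{prop:CharactI=P+QLinear}, once we observe that \emph{both} ideals $P$ and $Q$ appearing in Corollary \ref{Cor:IlinResNotCompletely} always have a $d$--linear resolution, regardless of whether $I$ does. First I would record the structural facts: the ideal $I=(\mathcal{L}_t(u,v))$ is generated in the single degree $d$, being generated by a subset of $M_{n,d,t}$; and by Corollary \ref{Cor:IlinResNotCompletely} (together with Criterion \ref{Lem:BettiSplitxiV2}) the decomposition $I=P+Q$, with $P=(w\in G(I):x_1\mid w)=x_1(\mathcal{L}_t^f(u/x_1))$ and $Q=(\mathcal{L}_t(x_2x_{2+t}\cdots x_{2+(d-1)t},v))$, is a Betti splitting; in particular $G(P)$ and $G(Q)$ are disjoint, both are nonempty (since $x_1\mid u$ forces $u\in G(P)$ and $\min(v)>1$ forces $v\le_{\slex}x_2x_{2+t}\cdots x_{2+(d-1)t}$, so $Q\ne 0$), and both $P$ and $Q$ are generated in degree $d$.

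Next I would invoke the linear resolutions of $P$ and $Q$ that are already available: the proof of Corollary \ref{Cor:IlinResNotCompletely} shows $P$ is, up to the shift by $x_1$, a $t$--spread final lexsegment ideal of $K[x_2,\dots,x_n]$, hence has a $d$--linear resolution by \cite[Theorem 1.4]{EHQ}; and Remark \ref{rem:PQlinear} records that $Q$ is an initial $t$--spread lexsegment ideal of $K[x_2,\dots,x_n]$ generated in degree $d$, hence also has a $d$--linear resolution. Thus the hypotheses of Proposition \ref{prop:CharactI=P+QLinear} are met, and condition (b) of that proposition — ``$P,Q$ have $d$--linear resolutions and $P\cap Q$ has a $(d+1)$--linear resolution'' — reduces to the single requirement that $P\cap Q$ have a $(d+1)$--linear resolution.

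Finally I would spell out the equivalence. For (i)$\Rightarrow$(ii): if $I$ has a $d$--linear resolution, the implication (a)$\Rightarrow$(b) of Proposition \ref{prop:CharactI=P+QLinear} forces $P\cap Q$ to have a $(d+1)$--linear resolution. For (ii)$\Rightarrow$(i): if $P\cap Q$ has a $(d+1)$--linear resolution, then, combined with the $d$--linear resolutions of $P$ and $Q$ established above, the implication (b)$\Rightarrow$(a) of Proposition \ref{prop:CharactI=P+QLinear} yields that $I$ has a $d$--linear resolution. I do not expect a genuine obstacle here: the entire content sits in the two earlier results, and this corollary is the bookkeeping that combines them after noting that $P$ and $Q$ are \emph{unconditionally} linear. (A minor remark worth including is that $P\cap Q\ne 0$, since $PQ\subseteq P\cap Q$ with $P,Q\ne 0$, so the statement about $P\cap Q$ is never vacuous; and that this reformulation is precisely what will make the classification in Theorem \ref{teor:InotCompLinRes} tractable, by shifting the question onto the single, more explicitly describable ideal $P\cap Q$.)
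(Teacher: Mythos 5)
Your proposal is correct and follows essentially the same route as the paper: the paper's proof likewise invokes Proposition \ref{prop:CharactI=P+QLinear} together with the fact, established in the proof of Corollary \ref{Cor:IlinResNotCompletely} and Remark \ref{rem:PQlinear}, that $P$ and $Q$ unconditionally have $d$--linear resolutions. Your write-up merely spells out these ingredients more explicitly than the paper's two-line argument.
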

 \begin{proof} It follows from Proposition \ref{prop:CharactI=P+QLinear} and the proof of Corollary \ref {Cor:IlinResNotCompletely}. Indeed, $P$ and $Q$ have $d$--linear resolutions.
 \end{proof}
 
 \begin{Remark}\rm Corollary \ref{Prop:BettiBol}, can be obtained without Proposition \ref{prop:CharactI=P+QLinear}. In fact, (i) $\Rightarrow $(ii) follows from \cite[Proposition 3.1]{DB}.
 	Conversely, assume that $P\cap Q$ has a $(d+1)$--linear resolution. Thus, $\reg(P\cap Q)=d+1$ and $P\cap Q$ is generated in degree $d+1$. Since $I=P+Q$ is a Betti splitting of $I$ (Corollary \ref{Cor:IlinResNotCompletely}), from \cite[Corollary 2.2 (a)]{FHT2009}, one has
 	\begin{equation}\label{eq:regmaxPQtLex}
 	\reg(I)=\max\{\reg(P),\reg(Q),\reg(P\cap Q)-1\}.
 	\end{equation}
 	On the other hand, $P$ and $Q$ have $d$--linear resolutions. Hence, $\reg(P)=\reg(Q)=d$ and from (\ref{eq:regmaxPQtLex}), it follows that $\reg(I)=d$. Finally, since $I$ is generated in degree $d$, $I$ has a $d$--linear resolution and (ii) $\Rightarrow $(i) follows.
 \end{Remark}
 \begin{Corollary}\label{Cor:i2>2+tNotCom}
 	Let $u=x_{i_1}x_{i_2}\cdots x_{i_d},v\in M_{n,d,t}$ with $\min(u)=1$ and $\min(v)>1$. Assume $I=(\mathcal{L}_t(u,v))$ has a $d$--linear resolution. If $I$ is an incompletely $t$--spread lexsegment ideal then $v\le_{\slex} x_2u/x_1$ and $i_2>2+t$.
 \end{Corollary}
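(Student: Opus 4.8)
The plan is to obtain the two conclusions by combining the Betti splitting $I=P+Q$ of Corollary~\ref{Cor:IlinResNotCompletely} with the completeness criterion of Theorem~\ref{thm:compltspreadlex}(i). Note first that $i_2\ge 1+t$ (as $u=x_1x_{i_2}\cdots x_{i_d}$ is $t$--spread) and that any $w\in M_{n,d,t}$ with $w<_{\slex}v$ satisfies $\min(w)\ge2$ (otherwise $w>_{\slex}v$, since $\min(v)>1$); I also use that $\ge_{\slex}$ and $\ge_{\lex}$ coincide on $M_{n,d,t}$. The first step is to show $i_2\ge 2+t$ by excluding $i_2=1+t$: if $i_2=1+t$, then for each $w<_{\slex}v$, taking $s=\min(w)>1$, the second smallest index $k$ of $\supp(w)$ satisfies $k\ge\min(w)+t\ge 2+t>1+t=i_2$, so $x_1(w/x_s)<_{\lex}u$, and by Theorem~\ref{thm:compltspreadlex}(i) the ideal $I$ would be a completely $t$--spread lexsegment ideal, contrary to hypothesis. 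In particular $x_2u/x_1=x_2x_{i_2}\cdots x_{i_d}$ is now $t$--spread, with minimum $2$.

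The second (and crucial) step is to prove $v\le_{\slex}x_2u/x_1$; here the linear resolution is used. With $P=x_1(\mathcal{L}_t^f(u/x_1))$ and $Q=(\mathcal{L}_t(x_2x_{2+t}\cdots x_{2+(d-1)t},v))$ as in Corollary~\ref{Cor:IlinResNotCompletely}, one checks $G(P)=\{x_1p':p'\in M_{n,d-1,t},\,p'\le_{\slex}u/x_1\}$ (each such $p'$ automatically has $\min(p')\ge i_2\ge 2+t$) and $G(Q)=\{q\in M_{n,d,t}:\min(q)\ge2,\ q\ge_{\slex}v\}$. Suppose, for contradiction, $v>_{\slex}x_2u/x_1$. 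Since $\min(v)\ge2$, this forces $\min(v)=2$, say $v=x_2v'$ with $v'>_{\slex}u/x_1$, and then every $q\in G(Q)$ has $\min(q)=2$, say $q=x_2q'$ with $q'\ge_{\slex}v'>_{\slex}u/x_1\ge_{\slex}p'$. Hence $x_2\nmid x_1p'$ and $x_1\nmid x_2q'$, so $\lcm(x_1p',x_2q')=x_1x_2\lcm(p',q')$, and since $p'\ne q'$ are distinct monomials of degree $d-1$ we have $\deg\lcm(p',q')\ge d$; thus every $\lcm$ of a generator of $P$ with a generator of $Q$ has degree $\ge d+2$. As $G(P\cap Q)$ lies among such $\lcm$'s and $P\cap Q\ne0$ ($P,Q\ne0$ and $S$ is a domain), $P\cap Q$ is not generated in degree $d+1$, hence has no $(d+1)$--linear resolution; by Corollary~\ref{Prop:BettiBol}, $I$ has no $d$--linear resolution — a contradiction. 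Therefore $v\le_{\slex}x_2u/x_1$.

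The third step is to rule out $i_2=2+t$, using Theorem~\ref{thm:compltspreadlex}(i) together with the inequality $v\le_{\slex}x_2u/x_1$ just established. If $i_2=2+t$, then for $w<_{\slex}v$ we have $\min(w)\ge2$ and $w<_{\slex}x_2u/x_1$; when $\min(w)\ge3$, the computation from the first step (with $s=\min(w)$) gives $x_1(w/x_s)<_{\lex}u$, while when $\min(w)=2$, writing $w=x_2w'$ yields $w'<_{\slex}u/x_1$, hence $x_1w'<_{\lex}u$, and $s=2$ works. In either case the criterion holds, so $I$ would be completely $t$--spread lexsegment, a contradiction; thus $i_2>2+t$. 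The step I expect to be the obstacle is the second one: pinning down $G(P)$ and $G(Q)$ precisely and spotting the chain $q'\ge_{\slex}v'>_{\slex}u/x_1\ge_{\slex}p'$, which is exactly what forces every pairwise $\lcm$ to have degree $\ge d+2$ instead of $d+1$. Steps one and three are then routine checks against the completeness criterion.
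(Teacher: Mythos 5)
Your proof is correct and follows essentially the same route as the paper's: the same Betti splitting $I=P+Q$, the same reduction via Corollary \ref{Prop:BettiBol} to the degrees of the generators of $P\cap Q$, and the same application of Theorem \ref{thm:compltspreadlex}(i) to exclude $i_2\le 2+t$. The only cosmetic differences are that you run the $\lcm$--degree computation contrapositively (showing every $\lcm$ of a generator of $P$ with one of $Q$ has degree $\ge d+2$ when $v>_{\slex}x_2u/x_1$) and dispose of $i_2=1+t$ up front using incompleteness alone, whereas the paper argues directly from a degree--$(d+1)$ minimal generator of $P\cap Q$ and treats $i_2\in\{1+t,2+t\}$ together.
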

 \begin{proof}
 	The proof of Corollary \ref{Cor:IlinResNotCompletely} provides a Betti splitting of $I$, namely $I = P+Q$, with $P=(\mathcal{L}_t(u,x_1x_{n-(d-2)t}\cdots x_{n-t}x_{n}))=x_1(\mathcal{L}_t^f(u/x_{1}))$ and
 	$ Q=(\mathcal{L}_t(x_2x_{2+t}\cdots$ $x_{2+(d-1)t}, v))$. Moreover, Corollary \ref{Prop:BettiBol} states that $I$ has a $d$--linear resolution if and only if $P\cap Q=x_1((\mathcal{L}_t^f(u/x_1))\cap Q$ has a $(d+1)$--linear resolution. Let $w\in G(P)=x_1\mathcal{L}_t^f(u/x_{1})$ and $z\in G(Q)=\mathcal{L}_t(x_2x_{2+t}\cdots x_{2+(d-1)t},v)$ monomials such that $\lcm(w,z)\in G(P\cap Q)$. Since $P\cap Q$ is generated in degree $d+1$, then $\lcm(w,z)=x_1z$ because $z$ is not divisible by $x_1$ and $\deg(\lcm(w,z))=d+1$. Hence, $w=x_1z/x_j$ for some $j\in\supp(z)$. In particular, $j\ge2$. It follows that $x_1z/x_j=w\in\mathcal{L}_t(u,x_1x_{n-(d-2)t}\cdots x_{n-t}x_{n})$, \emph{i.e.}, $x_1z/x_j\le_{\slex} u$ and consequently $v\le_{\slex} z\le_{\lex} x_ju/x_1\le_{\lex} x_2u/x_1$. Finally, $v\le_{\lex} x_2u/x_1$. Note that we have resorted to the use of the usual lexicographic order $\le_{\lex}$ because the monomials $x_ju/x_1$  ($j\ge 2$) may not be squarefree. \\
 	Now, we show that $i_2>2+t$. This implies that $v\le_{\slex} x_2u/x_1$. For our purpose, it is enough to show that if $i_2=1+t$ or $i_2=2+t$, then $I$ is a completely $t$--spread lexsegment ideal against our assumption. \\ Assume $i_2\in\{1+t,2+t\}$ and let $w\in M_{n,d,t}$, $w<_{\slex}v$. By the previous calculations, $v\le_{\lex} x_2u/x_1$, thus $w<_{\slex} v\le_{\lex} x_2u/x_1$. We distinguish two cases.\\
 	\textsc{Case 1.} Let $\min(w)=2$. Then, $w/x_2\le_{\slex} u/x_1$ and $x_1w/x_2\le_{\slex} u$.\\
 	\textsc{Case 2.} Let $\min(w)>2$. It follows that $\min(w/x_{\min(w)})>2+t\ge i_2=\min(u/x_1)$. Then, $w/x_{\min(w)}<_{\slex}u/x_1$ and $x_1w/x_{\min(w)}<_{\slex}u$.\\
 	Finally, in both cases we get $x_1w/x_{\min(w)}\le_{\slex} u$ and $I$ is a completely $t$--spread lexsegment ideal (Theorem \ref{thm:compltspreadlex}(i)).
 \end{proof}
 
 \begin{Corollary}\label{Cor:IlinResi2>1+tNotCom}
 	Let $u=x_1x_{i_2}x_{i_3}\cdots x_{i_d},v=x_{\ell}x_{n-(d-2)t}\cdots x_{n-t}x_{n}\in M_{n,d,t}$ for some $2\le\ell\le n-(d-1)t$ and $i_2\ge\ell+t$. Then $I=(\mathcal{L}_t(u,v))$ has a $d$--linear resolution.
 \end{Corollary}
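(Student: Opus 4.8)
The plan is to exploit the canonical Betti splitting $I=P+Q$ provided by Corollary~\ref{Cor:IlinResNotCompletely}, namely with $P=(\mathcal{L}_t(u,x_1x_{n-(d-2)t}\cdots x_{n-t}x_n))=x_1(\mathcal{L}_t^f(u/x_1))$ and $Q=(\mathcal{L}_t(x_2x_{2+t}\cdots x_{2+(d-1)t},v))$. Since $\min(u)=1$ and $\min(v)=\ell>1$, Corollary~\ref{Prop:BettiBol} applies, so proving the assertion is equivalent to showing that $P\cap Q$ has a $(d+1)$-linear resolution. One may assume $d\ge2$, the case $d=1$ being trivial since then $I=(x_1,\dots,x_\ell)$. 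Put $u'=u/x_1$ and $A=(\mathcal{L}_t^f(u'))$; as recalled in the proof of Corollary~\ref{Cor:i2>2+tNotCom} one has $P\cap Q=x_1(A\cap Q)$, so it suffices to prove that $A\cap Q$ has a $d$-linear resolution.

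First I would record two elementary facts: $A$ is a final $t$-spread lexsegment ideal, hence has a $(d-1)$-linear resolution by \cite[Theorem 1.4]{EHQ}, and every $a\in G(A)$ satisfies $\min(a)\ge\min(u')=i_2\ge\ell+t$; moreover, because $n\ge\ell+(d-1)t$, the monomial $v$ is the $\slex$-smallest $t$-spread monomial of degree $d$ with $\min=\ell$, so that $G(Q)$ consists exactly of the $t$-spread monomials $w$ of degree $d$ with $2\le\min(w)\le\ell$. The crucial combinatorial step is then the identity
\[
A\cap Q=(x_2,\dots,x_\ell)\,A .
\]
For ``$\supseteq$'' one checks that for $j\in\{2,\dots,\ell\}$ and $a\in G(A)$ the monomial $x_ja$ is $t$-spread (using $\min(a)-j\ge(\ell+t)-\ell=t$), has degree $d$ and minimum $j$, hence belongs to $G(Q)\subseteq Q$, while it trivially lies in $A$. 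For ``$\subseteq$'', given a monomial $m\in A\cap Q$ pick $z\in G(Q)$ dividing $m$ and $a\in G(A)$ dividing $m$; then $j:=\min(z)\in\{2,\dots,\ell\}$ satisfies $j<i_2\le\min(a)$, so $\gcd(x_j,a)=1$, whence $x_ja$ divides $m$ and $m\in x_jA\subseteq(x_2,\dots,x_\ell)A$. Consequently $P\cap Q=x_1(x_2,\dots,x_\ell)A$.

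It remains to show $(x_2,\dots,x_\ell)A$ has a $d$-linear resolution; multiplying by $x_1$ then yields the $(d+1)$-linear resolution of $P\cap Q$, and Corollary~\ref{Prop:BettiBol} finishes the proof. The point is that the variables $x_2,\dots,x_\ell$ are disjoint from all variables occurring in $G(A)$, which have index $\ge i_2>\ell$. I would prove, by induction on $|T|$, the more general statement that $(x_j:j\in T)A$ has a $d$-linear resolution for every nonempty $T\subseteq\{2,\dots,\ell\}$: if $|T|=1$ this ideal is just $x_jA$, a shift of the $(d-1)$-linear ideal $A$; if $|T|\ge2$, fix $j_0\in T$ and write $(x_j:j\in T)A=P'+Q'$, where $P'=x_{j_0}A$ is generated by the minimal generators divisible by $x_{j_0}$ and $Q'=(x_j:j\in T\setminus\{j_0\})A$. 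Since $P'$ has a $d$-linear resolution, Criterion~\ref{Lem:BettiSplitxiV2} shows this is a Betti splitting; $Q'$ has a $d$-linear resolution by the inductive hypothesis; and the same disjointness computation as above gives $P'\cap Q'=x_{j_0}Q'$, which is $(d+1)$-linear. Proposition~\ref{prop:CharactI=P+QLinear} then completes the induction, and applying it with $T=\{2,\dots,\ell\}$ gives the claim.

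I expect the main obstacle to be the combinatorial identity $A\cap Q=(x_2,\dots,x_\ell)A$: one must see precisely why the hypothesis $i_2\ge\ell+t$ is exactly what forces each product $x_ja$ ($j\le\ell$, $a\in G(A)$) to be a genuine $t$-spread generator of $Q$ lying in $A$, and why no higher-degree minimal generators can appear in the intersection. Everything after that is a routine iteration of the Betti-splitting machinery established in Section~\ref{sec1}.
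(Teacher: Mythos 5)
Your proposal is correct and follows essentially the same route as the paper: the same Betti splitting $I=P+Q$, the reduction via Corollary~\ref{Prop:BettiBol} to $P\cap Q=x_1(A\cap Q)$, the same key identity $A\cap Q=(x_2,\dots,x_\ell)A$ (the paper's equation~(\ref{eq:DecompY})), and the same inductive Betti-splitting argument (the paper inducts on $\ell$ by splitting off $x_\ell A$; your induction on $|T|$ is only a cosmetic variant). No gaps.
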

 \begin{proof} As in Corollary \ref{Cor:IlinResNotCompletely}, set $P=(\mathcal{L}_t(u,x_1x_{n-(d-2)t}\cdots x_{n-t}x_{n}))=x_1(\mathcal{L}_t^f(u/x_{1}))$, and $Q=(\mathcal{L}_t(x_2x_{2+t}\cdots x_{2+(d-1)t},v)).$
 	Using Corollary \ref{Prop:BettiBol}, it is enough to prove that $P\cap Q$ has a $(d+1)$--linear resolution.
 	Note that $P\cap Q = x_1((\mathcal{L}_t^f(u/x_1))\cap Q)$. Indeed, let $w\in P\cap Q$. Then, $w=x_1ay=bz$, where $y\in \mathcal{L}_t^f(u/x_1)$, $z\in G(Q)$ and $a,b$ are monomials. Since $x_1$ does not divide $z$, then $x_1$ divides $b$ and therefore $w\in x_1((\mathcal{L}_t^f(u/x_1))\cap Q)$. The converse inclusion is clear.\\
 	Hence, $P\cap Q$ has a (d+1)--linear resolution if and only if the ideal $Y=(\mathcal{L}_t^f(u/x_1))\cap Q$ considered in $K[x_2,\dots,x_n]$ has a $d$--linear resolution. First, we prove that
 	\begin{equation}\label{eq:DecompY}
 	Y=(\mathcal{L}_t^f(u/x_1))\cap Q=\textstyle\sum\limits_{i=2}^{\ell}x_i(\mathcal{L}_t^f(u/x_1)).
 	\end{equation}
 	Let $w\in Y$. We have that $w=w_1y=w_2z$ with $y\in\mathcal{L}_t^f(u/x_1)$, $z\in G(Q)$ and $w_1,w_2$ monomials of $K[x_2,\dots,x_n]$. Let $i=\min(z)$. Then, $2\le i\le\ell$ and $\min(y)\ge i_2\ge\ell+t$, for some $2\le\ell\le n-(d-1)t$. It follows that  $x_i$ divides $w=w_1y$ and does not divide $y$. Thus $x_i$ divides $w_1$. We have shown that $w$ is a multiple of $x_iy\in x_i(\mathcal{L}_t^f(u/x_1))$, $2\le i \le \ell$. Hence, $Y\subseteq\sum_{i=2}^{\ell}x_i(\mathcal{L}_t^f(u/x_1))$.\\
 	Conversely, let $w\in\sum_{i=2}^{\ell}x_i(\mathcal{L}_t^f(u/x_1))$. Then, $w=x_iy$, for some $2\le i \le \ell$. Note that $w$ is $t$--spread. Indeed, $\min(y)\ge i_2\ge \ell+t$, $i\le\ell$ and so $\min(y)-i\ge\min(y)-\ell\ge t$. On the other hand, $x_2x_{2+t}\cdots x_{2+(d-1)t}\ge_{\slex} w$ and $w\ge_{\slex} v$. In fact, $\min(w)=i\le\ell=\min(v)$ and $v$ is the smallest $t$--spread monomial with minimum equal to $\ell$. Hence $w\in\mathcal{L}_t(x_2x_{2+t}\cdots x_{2+(d-1)t},v)=G(Q)$. Finally, $w\in(\mathcal{L}_t^f(u/x_1))\cap Q=Y$.\\
 	Now, we prove that $Y$ has a $d$--linear resolution. We proceed by induction on $\ell$.
 	Let $\ell=2$. By (\ref{eq:DecompY}), $Y=x_2(\mathcal{L}_t^f(u/x_1))$ has a $d$--linear resolution because $(\mathcal{L}_t^f(u/x_1))$ has a $(d-1)$--linear resolution.
 	Let $\ell>2$, and set
 	\begin{align*}
 	Y_1&=\textstyle\sum\limits_{i=2}^{\ell-1}x_i(\mathcal{L}_t^f(u/x_1)),\ \ \ \ \ \ \ \
 	Y_2=x_\ell(\mathcal{L}_t^f(u/x_1)).
 	\end{align*}
 	By induction, both $Y_1$ and $Y_2$ have $d$--linear resolutions. Note that $Y=Y_1+Y_2$ is a Betti splitting. Indeed, $Y_2=(w\in G(Y):x_{\ell}\ \textup{divides}\ w)$ has a $d$--linear resolution. By Proposition \ref{prop:CharactI=P+QLinear}, $Y$ has a $d$--linear resolution if and only if $Y_1\cap Y_2$ has a $(d+1)$--linear resolution. As $x_\ell$ does not divide any minimal generator of $Y_1$, we have that
 	\begin{align*}
 	Y_1\cap Y_2\ &=\ \textstyle\sum\limits_{i=2}^{\ell-1}x_i(\mathcal{L}_t^f(u/x_1))\cap x_{\ell}(\mathcal{L}_t^f(u/x_1))=\textstyle\sum\limits_{i=2}^{\ell-1}x_ix_{\ell}(\mathcal{L}_t^f(u/x_1))\\
 	&=\ x_{\ell}\big(\textstyle\sum\limits_{i=2}^{\ell-1}x_i(\mathcal{L}_t^f(u/x_1))\big)=x_{\ell}Y_1.
 	\end{align*}
 	Since $Y_1$ has a $d$--linear resolution, then $Y_1\cap Y_2=x_\ell Y_1$ has a $(d+1)$--linear resolution. The assertion follows.
 \end{proof}
 
 In order to get our main result we need to prove another statement.
 \begin{Proposition}\label{prop:suffconditionIcompTSpread}
 	Let $u,v\in M_{n,d,t}$ with $\min(u)=1$ and $\min(v)>1$. Suppose that $\min(u/x_1)<\min(v)+t$. Then $I=(\mathcal{L}_t(u,v))$ is a completely $t$--spread lexsegment ideal.
 \end{Proposition}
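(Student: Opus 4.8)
The plan is to verify the combinatorial criterion of Theorem~\ref{thm:compltspreadlex}(i). Since $\min(u)=1$, write $u=x_1x_{i_2}\cdots x_{i_d}$; then condition (b) there becomes: \emph{for every $w\in M_{n,d,t}$ with $w<_{\slex}v$ there is an integer $s>1$ such that $x_s$ divides $w$ and $x_1(w/x_s)\le_{\lex}u$.} So it suffices to produce, for each such $w$, a suitable index $s$, and the whole proof reduces to one short inequality.

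First I would fix $w=x_{k_1}x_{k_2}\cdots x_{k_d}\in M_{n,d,t}$ with $w<_{\slex}v$ and record two elementary facts. Writing $v=x_{j_1}x_{j_2}\cdots x_{j_d}$, the definition of $<_{\slex}$ together with $\min(v)=j_1>1$ forces $k_1=\min(w)\ge j_1>1$; and since $w$ is $t$--spread, $k_2\ge k_1+t$.

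The key point is the choice $s:=k_1=\min(w)$ --- not the more obvious $s=\max(w)$, which fails in general. With this choice $s>1$ and $x_s$ divides $w$, so it only remains to check that $x_1(w/x_s)=x_1x_{k_2}x_{k_3}\cdots x_{k_d}\le_{\lex}u=x_1x_{i_2}\cdots x_{i_d}$. Combining the two recorded facts with the hypothesis $i_2=\min(u/x_1)<\min(v)+t=j_1+t$ yields
\[
k_2\ \ge\ k_1+t\ \ge\ j_1+t\ >\ i_2.
\]
Hence $x_1(w/x_s)$ and $u$ agree in the first slot, while in the second slot $x_1(w/x_s)$ carries the strictly larger index $k_2>i_2$; therefore $x_1(w/x_s)<_{\lex}u$, and in particular $x_1(w/x_s)\le_{\lex}u$. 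This verifies (b), so $I$ is a completely $t$--spread lexsegment ideal.

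There is essentially no obstacle once one spots that $s$ should be $\min(w)$: the entire content is the displayed inequality, whose first two steps are forced by the $t$--spread condition and by $w<_{\slex}v$, and whose final strict inequality is precisely the hypothesis. (The degenerate case $d=1$, in which $u/x_1=1$ and there is no $i_2$, may either be excluded in advance as in the standing assumptions $\deg(u)=\deg(v)>1$, or dispatched directly: then $I=(x_1,\ldots,x_{\min(v)})$, which is visibly the intersection of the initial lexsegment ideal $(\mathcal L_t^i(v))$ with $\mm=(\mathcal L_t^f(u))$.)
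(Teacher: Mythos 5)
Your proof is correct and takes essentially the same route as the paper's: both verify condition (b) of Theorem \ref{thm:compltspreadlex}(i) by choosing $s=\min(w)$ and deducing $\min(w/x_{\min(w)})\ge\min(v)+t>\min(u/x_1)$ from the $t$--spread condition together with $\min(w)\ge\min(v)$. The extra remark on the degenerate case $d=1$ is harmless but unnecessary given the standing assumption $\deg(u)=\deg(v)>1$.
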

 \begin{proof}
 	For any $w=x_{k_1}x_{k_2}\cdots x_{k_d}\in M_{n,d,t}$ with $w<_{\slex}v$, we have that $\min(w)=k_1\ge\min(v)$ and $k_2\ge\min(v)+t$. By the hypothesis, $\min(u/x_1)<\min(v)+t$.  Hence, from the inequality $\min(v)+t\le k_2 = \min(w/x_{\min(w)})$, it follows that $u/x_1>_{\slex}w/x_{\min(w)}$,\emph{i.e.}, $u >_{\slex}x_1w/x_{\min(w)}$. Finally, from Theorem \ref{thm:compltspreadlex}(i), $I$ is a completely $t$--spread lexsegment ideal.
 \end{proof}
 
 We are in position to prove our main theorem. It deals with a pivotal case in the classification of the $t$--spread lexsegment ideals with linear resolution.
 \begin{Theorem}\label{teor:InotCompLinRes}
 	Let $u=x_{i_1}x_{i_2}\cdots x_{i_d},v=x_{j_1}x_{j_2}\cdots x_{j_d}\in M_{n,d,t}$ with $\min(u)=i_1=1$ and $\min(v)=j_1>1$. Assume $I=(\mathcal{L}_t(u,v))$ is an incompletely $t$--spread lexsegment ideal. Then $I$ has a linear resolution if and only if
 	$
 	u<_{\slex}x_1x_{\ell+1+t}\cdots x_{\ell+1+(d-1)t}$ and $v=x_{\ell}x_{n-(d-2)t}\cdots x_{n-t}x_n$ for some $2\le\ell<n-(d-1)t$.
 \end{Theorem}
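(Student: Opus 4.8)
The plan is to prove the two implications separately; the backward one follows quickly from the results already in hand, while the forward one carries the combinatorial content. For the backward implication, suppose $v=x_\ell x_{n-(d-2)t}\cdots x_{n-t}x_n$ and $u<_{\slex}x_1x_{\ell+1+t}x_{\ell+1+2t}\cdots x_{\ell+1+(d-1)t}$ with $2\le\ell<n-(d-1)t$. Since $\min(u)=1$, comparing $u=x_1x_{i_2}\cdots x_{i_d}$ with $x_1x_{\ell+1+t}\cdots x_{\ell+1+(d-1)t}$ in $\ge_{\slex}$ forces $i_2\ge\ell+1+t$, so in particular $i_2\ge\ell+t$; then $v$ has exactly the form required in Corollary \ref{Cor:IlinResi2>1+tNotCom}, which yields that $I$ has a $d$-linear resolution. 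For the forward implication, assume $I$ is incompletely $t$-spread lexsegment with a $d$-linear resolution and set $\ell=\min(v)=j_1$. Using the natural Betti splitting $I=P+Q$ of Corollary \ref{Cor:IlinResNotCompletely}, with $P=x_1(\mathcal{L}_t^f(u/x_1))$ and $Q=(\mathcal{L}_t(x_2x_{2+t}\cdots x_{2+(d-1)t},v))$ the initial $t$-spread lexsegment ideal $(\mathcal{L}_t^i(v))$ of $R=K[x_2,\dots,x_n]$, one has $P\cap Q=x_1\,Y$ for $Y:=(\mathcal{L}_t^f(u/x_1))\cap Q\subseteq R$, and by Corollary \ref{Prop:BettiBol} (after the harmless shift by $x_1$) the ideal $I$ has a $d$-linear resolution if and only if $Y$ does; in particular $Y$ must be generated in degree $d$.

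The crux is to deduce from this that $v=x_\ell x_{n-(d-2)t}\cdots x_{n-t}x_n$, i.e.\ that $v$ is the $\slex$-smallest $t$-spread monomial of degree $d$ with minimum $\ell$. I would argue by contradiction. First note the structure of $Y$: every degree-$(d-1)$ divisor of a $t$-spread monomial $w$ of degree $d$ is again $t$-spread, and the $\slex$-smallest of them is $w/x_{\min(w)}$; hence a $t$-spread monomial $w$ of degree $d$ lies in $Y$ exactly when $w\ge_{\slex}v$ and $w/x_{\min(w)}\le_{\slex}u/x_1$, and since $(\mathcal{L}_t^f(u/x_1))$ is generated in degree $d-1$ while $Q$ is generated in degree $d$, the ideal $Y$ need not be generated in degree $d$. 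Assuming $v$ strictly exceeds $x_\ell x_{n-(d-2)t}\cdots x_{n-t}x_n$, and using that $\min(u/x_1)\ge\ell+t$ (Proposition \ref{prop:suffconditionIcompTSpread}, since $I$ is incompletely), I would exhibit a $t$-spread monomial $m$ of degree $d+1$ — of the shape $\lcm(z,u/x_1)$ for a suitable $z\in G(Q)$ with $z/x_{\min(z)}>_{\slex}u/x_1$ — such that no degree-$d$ divisor of $m$ lies in $Y$: each such divisor either fails $w/x_{\min(w)}\le_{\slex}u/x_1$ (as $\min(u/x_1)$ is large) or fails $w\ge_{\slex}v$ (as $v$ lies too far from the final monomial with minimum $\ell$). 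Then $m$ is a minimal generator of $Y$ of degree $d+1$, a contradiction, so $v=x_\ell x_{n-(d-2)t}\cdots x_{n-t}x_n$.

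Once the shape of $v$ is known, the conditions on $\ell$ and $u$ follow using only that $I$ is incompletely $t$-spread lexsegment. Since $\min(v)=\ell>1$ we get $\ell\ge2$; and $\ell=n-(d-1)t$ would make $v=\min(M_{n,d,t})$ and $I=(\mathcal{L}_t^f(u))$ final, hence complete, so $\ell<n-(d-1)t$. By Proposition \ref{prop:suffconditionIcompTSpread}, $i_2=\min(u/x_1)\ge\ell+t$. If $i_2=\ell+t$, or if $i_2=\ell+1+t$ and $u=x_1x_{\ell+1+t}\cdots x_{\ell+1+(d-1)t}$, then every $w<_{\slex}v$ has $\min(w)\ge\ell+1$ (as $v$ is now the $\slex$-minimum with minimum $\ell$), and taking $s=\min(w)$ in Theorem \ref{thm:compltspreadlex}(i)(b) gives $x_1(w/x_s)\le_{\lex}u$ by a position-by-position comparison; so $I$ would be completely $t$-spread lexsegment, a contradiction. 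Hence $i_2\ge\ell+1+t$ and $u\ne x_1x_{\ell+1+t}\cdots x_{\ell+1+(d-1)t}$, which, since $u$ is $t$-spread, is exactly $u<_{\slex}x_1x_{\ell+1+t}x_{\ell+1+2t}\cdots x_{\ell+1+(d-1)t}$, completing the proof.

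The main obstacle is the explicit construction inside the crux step: pinning down the generator $z\in G(Q)$ and checking — uniformly in $t$, $d$, and in where $v$ first deviates from $x_\ell x_{n-(d-2)t}\cdots x_{n-t}x_n$ — that $m=\lcm(z,u/x_1)$ is a minimal generator of $Y$ of degree $d+1$. As in the proof of Corollary \ref{Cor:IlinResi2>1+tNotCom}, I expect this to be organized as a short induction (on $\ell$, or on the distance of $v$ from the final monomial with minimum $\ell$), successively splitting off a variable $x_i$ with $2\le i\le\ell$ and applying Criterion \ref{Lem:BettiSplitxiV2} and Proposition \ref{prop:CharactI=P+QLinear} at each stage, possibly with a case split according to whether $v\in(\mathcal{L}_t^f(u/x_1))$.
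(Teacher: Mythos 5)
Your backward implication, the exclusion of $\ell=n-(d-1)t$, and the final derivation of $u<_{\slex}x_1x_{\ell+1+t}\cdots x_{\ell+1+(d-1)t}$ from incompleteness once the shape of $v$ is known are all correct and essentially coincide with the paper's argument (the paper phrases the last step contrapositively via Theorem \ref{thm:compltspreadlex}(i)). The problem is the crux step, which you yourself flag as ``the main obstacle'' and do not carry out; worse, the witness you propose there cannot work. Any monomial of the shape $m=\lcm(z,u/x_1)$ with $z\in G(Q)$ is divisible by $x_{\min(z)}\cdot(u/x_1)$, a squarefree $t$--spread monomial of degree $d$ (note $\min(z)\le\ell<\ell+t\le i_2=\min(u/x_1)$). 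In the pivotal case $\ell=2$ every $z\in G(Q)$ has $\min(z)=2$, and Corollary \ref{Cor:i2>2+tNotCom} --- which is at your disposal inside the contradiction argument, since $I$ is assumed to have a linear resolution --- gives $v\le_{\slex}x_2(u/x_1)$ and $i_2>2+t$; hence $x_2(u/x_1)$ belongs to $G(Q)$ and, being a multiple of $u/x_1$, to $Y$. So $x_2(u/x_1)$ is a degree--$d$ element of $Y$ that strictly divides your $m$ whenever $\deg m=d+1$, and $m$ is never a minimal generator of $Y$: the claim ``no degree--$d$ divisor of $m$ lies in $Y$'' fails systematically. The obstruction must instead be built from the $\slex$--\emph{smallest} element of $\mathcal{L}_t^f(u/x_1)$, not from $u/x_1$: the paper takes $w=x_2x_{2+t}x_{n-(d-3)t}\cdots x_{n-t}x_n\in G(Q)$ (which exists precisely because $v\ne x_2x_{n-(d-2)t}\cdots x_n$ forces $j_2>2+t$) and $z=x_1x_{n-(d-2)t}\cdots x_{n-t}x_n\in G(P)$, and verifies by hand that $\lcm(w,z)$, of degree $d+2$, is a minimal generator of $P\cap Q$.

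A second, smaller gap is the passage from general $\ell$ to the case $\ell=2$. You suggest handling all $\ell$ ``uniformly'' or by an induction splitting off variables $x_i$ with $2\le i\le\ell$, but give no argument. The paper does something genuinely different here: it restricts to the subring $\widetilde S=K[x_1,x_\ell,x_{\ell+1},\dots,x_n]$, shows that $\widetilde I=I\cap\widetilde S$ is again an incompletely $t$--spread lexsegment ideal, and proves via the multigraded long exact sequence of $\Tor$ associated to $0\to\widetilde I\cap J\to\widetilde I\oplus J\to I\to 0$ that linearity of $I$ forces linearity of $\widetilde I$, so that the $\ell=2$ analysis applies to $\widetilde I$. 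Some such transfer argument is needed and is absent from your proposal.
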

 \begin{proof}
 	Assume $I$ has a linear resolution. Set $j_1=\ell$. One has $2\le\ell\le n-(d-1)t$. The case $\ell=n-(d-1)t$ is not admissible, otherwise $I$ should be a final $t$--spread lexsegment ideal and thus a completely $t$--spread lexsegment ideal. Therefore, we distinguish two cases: $\ell=2$ and $2<\ell<n-(d-1)t$.\\
 	From Corollary \ref{Cor:IlinResNotCompletely}, the $t$--spread lexsegment ideal $I = P+Q$ is a Betti splitting, with $P=$ $(\mathcal{L}_t(u,x_1x_{n-(d-2)t}\cdots x_{n-t}x_{n}))=x_1(\mathcal{L}_t^f(u/x_{1}))$,
 	$Q=(\mathcal{L}_t(x_2x_{2+t}\cdots x_{2+(d-1)t},v))$. \\
 	\textsc{Case 1.} Let $\ell=2$. By Corollary \ref{Cor:i2>2+tNotCom}, $i_2>2+t=\ell+t$.
 	First, we prove that $v=x_2x_{n-(d-2)t}\cdots x_{n-t}x_{n}$. \\
 	Since $I$ has a $d$--linear resolution, then  $P\cap Q$ has a $(d+1)$--linear resolution (Corollary \ref{Prop:BettiBol}). Suppose for absurd that $v\ne x_{2}x_{n-(d-2)t}\cdots x_{n-t}x_{n}$. By Corollary \ref{Cor:i2>2+tNotCom}, we have $v\le_{\slex} x_2u/x_1$, thus $v/x_2\le_{\slex} u/x_1$ and $j_2\ge i_2>2+t$. Consider the monomial $w=x_2x_{2+t}x_{n-(d-3)t}\cdots x_{n-t}x_{n}$. $w$ is $t$--spread and $w>_{\slex}v$. In fact, $\min(w)=\min(v)=2$ and $2+t<j_2$. Thus, $w\in G(Q)=\mathcal{L}_t(x_2x_{2+t}\cdots x_{2+(d-1)t},v)$. Consider the monomial $z=x_1x_{n-(d-2)t}\cdots x_{n-t}x_{n}\in G(P)=\mathcal{L}_t(u,x_1x_{n-(d-2)t}\cdots x_{n-t}x_{n})$. It follows that
 	$$
 	y=\lcm(w,z)=x_1x_2x_{2+t}\big(\textstyle\prod_{p=0}^{d-2}x_{n-pt}\big)\in P\cap Q, 
 	$$
 	with $\deg(y)=d+2$. We will show that $y\in G(P\cap Q)$. As a consequence, we get that $P\cap Q$ is not generated in degree $d+1$ and so  $I$ will not  have a linear resolution.
 	All monomials of $P\cap Q$ are divided by $x_1x_2$. Hence, if $y$ is not a minimal generator of $P\cap Q$, either $y/x_{2+t}\in P\cap Q$ or $y/x_{n-pt}\in P\cap Q$, for some $p\in\{0,\dots,d-2\}$. The first instance does not occur, otherwise $y/x_{2+t}$ should be equal to $\lcm(z,x_2x_{n-(d-2)t}\cdots x_{n-t}x_{n})$. But our assumption on $v$ implies that $x_2x_{n-(d-2)t}\cdots x_{n-t}x_{n}\notin Q$. The second instance, namely $y/x_{n-pt}\in P\cap Q$, for some $p\in\{0,\dots,d-2\}$, does not occur either. Indeed, in such a case $y/(x_2x_{n-pt})=x_1x_{k_2}\cdots x_{k_d}\in G(P)$, necessarily. But $k_2=2+t<i_2=\min(u/x_1)$ and so $y/(x_2x_{n-pt})>_{\slex}u$ and $y/(x_2x_{n-pt})\notin P$. Hence, $I$ should not have a linear resolution (Theorem \ref{thm:compltspreadlex}(ii)). Therefore, it must be $v=x_{2}x_{n-(d-2)t}\cdots x_{n-t}x_{n}$.\\
 	Now, we determine all admissible monomials $u\in M_{n,d,t}$ such that $I$ is an incompletely $t$--spread lexsegment ideal. We will verify that it must be $$u<_{\slex}x_{1}x_{3+t}\cdots x_{3+(d-1)t}.$$ Consider the $t$--spread lexsegment set
 	$$\Omega=\{w\in M_{n,d,t}:w<_{\slex}v\}= \mathcal{L}_t(x_3x_{3+t}\cdots x_{3+(d-1)t},x_{n-(d-1)t}\cdots x_{n-t}x_{n}).$$
 	Suppose $u\ge_{\slex} x_1x_{3+t}\cdots x_{3+(d-1)t}$. Then, for any $w=x_{k_1}x_{k_2}\cdots x_{k}\in\Omega$, we have $k_1\ge3$, $k_2\ge 3+t$, $\dots$, $k_d\ge3+(d-1)t$ and consequently $u\ge_{\slex} x_1(w/x_{k_1})$. Hence, from Theorem \ref{thm:compltspreadlex}(i), $I$ should be a completely $t$--spread lexsegment ideal, contradicting the assumption on $I$.
 	Hence,  $u<_{\slex}x_1x_{3+t}\cdots x_{3+(d-1)t}$.\\ Indeed, if we assume $u<_{\slex}x_1x_{3+t}\cdots x_{3+(d-1)t}$, then $x_1(x_3x_{3+t}\cdots x_{3+(d-1)t})/x_{3+st}$ $>_{\slex}u$ for all $s=0,\dots,d-1$. By Theorem \ref{thm:compltspreadlex}(i), it follows that $I$ is an incompletely $t$--spread lexsegment ideal. The claim follows.\\
 	Conversely, if $v=x_{2}x_{n-(d-2)t}\cdots x_{n-t}x_{n}$ and $u<_{\slex}x_1x_{3+t}\cdots x_{3+(d-1)t}$, then $I$ has a linear resolution by Corollary \ref{Cor:IlinResi2>1+tNotCom}.\medskip\\
 	\textsc{Case 2.} Let $2<\ell<n-(d-1)t$. Since $I$ is an incompletely $t$--spread lexsegment ideal, Proposition \ref{prop:suffconditionIcompTSpread} implies $i_2\ge\ell+t$. Let $\widetilde{S}$ be the polynomial ring $K[x_1,x_{\ell},x_{\ell+1},\dots,x_n]$ and denote by $\widetilde{I}$ the ideal $I\cap\widetilde{S}$. It follows that $\widetilde{I}$ is a monomial ideal of $\widetilde{S}$ such that $G(\widetilde{I})=\{w\in G(I):\supp(w)\subseteq\{1,\ell,\ell+1,\dots,n\}\}$. Since $i_2\ge\ell+t$, $\widetilde{I}$ is again a $t$--spread lexsegment ideal of $\widetilde{S}$ which is an incompletely $t$--spread lexsegment. In fact, the sets $\{w\in M_{n,d,t}:w<_{\slex}v\}$ and $\{w\in M_{n,d,t}\cap\widetilde{S}:w<_{\slex}v\}$ are the same, as any monomial $w\in M_{n,d,t}$ with $w<_{\slex}v$ is such that $\min(w)\ge\min(v)=\ell$. Hence, $w\in\widetilde{S}$. On the other hand, $u\in\widetilde{S}$  and so  $I$ is a completely $t$--spread lexsegment ideal if and only if $\widetilde{I}$ is completely $t$--spread lexsegment. Thus, $\widetilde{I}$ is an incompletely $t$--spread lexsegment ideal since $I$ is incompletely $t$--spread lexsegment. 
 	\\
 	We claim that if $I$ has a $d$--linear resolution, then $\widetilde{I}$ has a $d$--linear resolution, too. For this purpose, note that $\mathcal{L}_t(u,x_1x_{n-(d-2)t}\cdots x_{n-t}x_{n})\subseteq G(\widetilde{I})$ and
 	$$
 	G(I)\setminus G(\widetilde{I})=\mathcal{L}_t(x_2x_{2+t}\cdots x_{2+(d-1)t},x_{\ell-1}x_{n-(d-2)t}\cdots x_{n-t}x_{n}).
 	$$
 	Let $J$ be the ideal generated by the $t$--spread lexsegment $G(I)\setminus G(\widetilde{I})$. If we think of $\widetilde{I}$ as an ideal of $S$, it is clear that $I=\widetilde{I}+J$. Hence, we can consider the natural short exact sequence of $K$--vector spaces
 	$$
 	0\rightarrow \widetilde{I}\cap J\rightarrow\widetilde{I}\oplus J\rightarrow I\rightarrow0
 	$$
 	which induces for every ${\bf a}=(a_1,\dots,a_n)\in\ZZ^n$ the long exact sequence
 	\begin{equation}\label{eq:LongExactSequenceTorbfa}
 	\cdots\rightarrow\Tor_i^S(K,\widetilde{I}\cap J)_{\bf a}\rightarrow\Tor_i^S(K,\widetilde{I})_{\bf a}\oplus\Tor_i^S(K,J)_{\bf a}\rightarrow\Tor_i^S(K,I)_{\bf a}\rightarrow\cdots,
 	\end{equation}
 	where the lower index ${\bf a}$ denotes the multigraded ${\bf a}$--th component.
 	
 	By contradiction, suppose that the minimal free resolution of $\widetilde{I}$ is not linear. Then there exists an $n$--tuple ${\bf a}=(a_1,\dots,a_n)\in\ZZ^n$ and an integer $i$ such that $\sum_{j=1}^na_j>d-i$ and $\Tor_i^S(K,\widetilde{I})_{\bf a}\ne0$. Since $\widetilde{I}$ is a monomial ideal of $\widetilde{S}=K[x_1,x_{\ell},x_{\ell+1},\dots,x_{n}]$, then 
 	$\Tor_i^S(K,\widetilde{I})_{\bf c}= 0$ if $c_j\neq 0$ for some $j\in \{2, \ldots, \ell\}$. Therefore, $a_j=0$ for $j=2,\dots,\ell-1$. Furthermore, all generators of $J$ and $\widetilde{I}\cap J$ are divided by at least one of the variables $x_2,x_3,\dots,x_{\ell-1}$, and thus $\Tor_i^S(K,J)_{\bf c}=\Tor_i^S(\widetilde{I}\cap J)_{\bf c}=0$ for all $i$ and all ${\bf c}=(c_1,\dots,c_n)\in\ZZ^n$ with $c_j=0$ for $j=2,\dots,\ell-1$. Hence, from the long exact sequence (\ref{eq:LongExactSequenceTorbfa}), we have $\Tor_i^S(K,I)_{\bf a}\cong\Tor_i^S(K,\widetilde{I})_{\bf a}\ne0$, against the fact that $I$ has a $d$--linear resolution. It follows that if $I$ has a linear resolution, then $\widetilde{I}$ has a $d$--linear resolution too. By \textsc{Case 1} applied to $\widetilde{I}$, we get $v=x_{\ell}x_{n-(d-2)t}\cdots x_{n-t}x_{n}$ and  $u<_{\slex}x_1x_{\ell+1+t}\cdots x_{\ell+1+(d-1)t}$.\\
 	Conversely, if $v=x_{\ell}x_{n-(d-2)t}\cdots x_{n-t}x_{n}$, $u<_{\slex}x_1x_{\ell+1+t}\cdots x_{\ell+1+(d-1)t}$, then $I$ has a $d$--linear resolution by Corollary \ref{Cor:IlinResi2>1+tNotCom}.
 \end{proof}
 
 \section{Some applications}\label{sec3}
 
 This section contains some applications of the results obtained in the previous sections. More precisely, we compute the Betti numbers of incompletely $t$--spread lexsegment ideals with linear resolutions and, furthermore,  we characterize all incompletely $t$--spread lexsegment ideal with linear quotients.
 
 In order to describe the Betti numbers of the class of $t$--spread lexsegment ideals, we have included \cite[Proposition 4.5]{FC1} as statement (a) in the following theorem.
 
 \begin{Theorem}\label{thm:Bettiformula}
 	Let $u,v\in M_{n,d,t}$ with $\min(u)=1$, $\min(v)>1$ and let $I=(\mathcal{L}_t(u,v))$. Assume $I$ has a linear resolution.
 	\begin{enumerate}
 		\item[\textup{(a)}] If $I$ is a completely $t$--spread lexsegment ideal, then for all $i\ge0$
 		$$
 		\beta_{i}(I)=\sum_{w\in\mathcal{L}_t^f(u)}\binom{n-\min(w)-(d-1)t}{i}-\sum_{\substack{w\in\mathcal{L}_t^f(v)\\ w\ne v}}\binom{\max(w)-(d-1)t-1}{i}.
 		$$
 		\item[\textup{(b)}] If $I$ is an incompletely $t$--spread lexsegment ideal, then for all $i\ge0$
 		\begin{align*}
 		\beta_i(I)&=\sum_{\substack{w\in\mathcal{L}_t(u,v)\\ \min(w)\ge2}}\binom{\max(w)-(d-1)t-2}{i}\\
 		&+\sum_{j=0}^{i}\sum_{w\in\mathcal{L}_t^f(u/x_1)}\binom{\min(v)-1}{j}\binom{n-\min(w)-(d-2)t}{i-j}.
 		\end{align*}
 	\end{enumerate}
 \end{Theorem}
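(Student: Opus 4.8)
Part (a) is \cite[Proposition 4.5]{FC1}, so only (b) requires proof, and throughout we assume $I$ is incompletely $t$--spread lexsegment with a linear resolution. The plan is to evaluate the three terms of the Betti splitting $I=P+Q$ produced in Corollary \ref{Cor:IlinResNotCompletely}. By Theorem \ref{teor:InotCompLinRes} we may write $v=x_\ell x_{n-(d-2)t}\cdots x_{n-t}x_n$ with $\ell=\min(v)$, $2\le\ell<n-(d-1)t$, and $u<_{\slex}x_1x_{\ell+1+t}\cdots x_{\ell+1+(d-1)t}$; in particular $i_2\ge\ell+t$. With $P=x_1(\mathcal{L}_t^f(u/x_1))$ and $Q=(\mathcal{L}_t(x_2x_{2+t}\cdots x_{2+(d-1)t},v))$, each of $P,Q,P\cap Q$ has a linear resolution (Corollaries \ref{Cor:IlinResNotCompletely}, \ref{Prop:BettiBol}), so the three strands collapse and formula (\ref{eq:totalBettiSplittingI=P+Q}) gives $\beta_i(I)=\beta_i(P)+\beta_i(Q)+\beta_{i-1}(P\cap Q)$ for all $i$.

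First I would compute $\beta_i(Q)$ and $\beta_i(P)$ directly from (\ref{eq1}) and (\ref{eq2}). Since $\min(u)=1$, any $t$--spread monomial $w$ of degree $d$ with $\min(w)\ge 2$ satisfies $w<_{\slex}u$ and $w\le_{\slex}x_2x_{2+t}\cdots x_{2+(d-1)t}$; hence $G(Q)$ consists precisely of the $w\in\mathcal{L}_t(u,v)$ with $\min(w)\ge 2$. After relabelling $x_2,\dots,x_n$ as $x_1,\dots,x_{n-1}$, the ideal $Q$ is an initial (hence $t$--spread strongly stable) $t$--spread lexsegment ideal generated in degree $d$, and (\ref{eq1}) yields
\[
\beta_i(Q)=\sum_{\substack{w\in\mathcal{L}_t(u,v)\\ \min(w)\ge 2}}\binom{\max(w)-(d-1)t-2}{i},
\]
the first sum in (b). Likewise $P=x_1F$ with $F=(\mathcal{L}_t^f(u/x_1))$ a final $t$--spread lexsegment ideal of $K[x_2,\dots,x_n]$ generated in degree $d-1$; multiplication by $x_1$ only shifts degrees, so $\beta_i(P)=\beta_i(F)$, and (\ref{eq2}) (after the same relabelling) gives $\beta_i(F)=\sum_{w\in\mathcal{L}_t^f(u/x_1)}\binom{n-\min(w)-(d-2)t}{i}$, which is exactly the $j=0$ summand of the double sum in (b).

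The remaining task, and the crux of the proof, is to show that $\beta_{i-1}(P\cap Q)$ supplies the summands with $j\ge 1$. Because $i_2\ge\ell+t$ and $v$ has the special form above, the computations in the proof of Corollary \ref{Cor:IlinResi2>1+tNotCom} apply and give $P\cap Q=x_1Y$ with $Y=\sum_{k=2}^{\ell}x_kF$, together with the nested Betti splittings $Y=Y_1+Y_2$, where $Y_2=x_\ell F$, $Y_1=\sum_{k=2}^{\ell-1}x_kF$ and $Y_1\cap Y_2=x_\ell Y_1$. Applying (\ref{eq:totalBettiSplittingI=P+Q}) to this splitting, and using once more that multiplication by a variable only shifts degrees, one obtains for $\ell\ge 3$ the recursion
\[
\beta_m\Big(\textstyle\sum_{k=2}^{\ell}x_kF\Big)=\beta_m\Big(\textstyle\sum_{k=2}^{\ell-1}x_kF\Big)+\beta_m(F)+\beta_{m-1}\Big(\textstyle\sum_{k=2}^{\ell-1}x_kF\Big),
\]
with base case $\beta_m(x_2F)=\beta_m(F)$. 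A straightforward induction on $\ell$, whose inductive step is just Pascal's rule $\binom{\ell-1}{j}=\binom{\ell-2}{j}+\binom{\ell-2}{j-1}$, solves it as $\beta_m(Y)=\sum_{j\ge 1}\binom{\ell-1}{j}\beta_{m+1-j}(F)$ (equivalently, this is the external/Künneth product formula, reflecting that none of $x_2,\dots,x_\ell$ divides a generator of $F$, since $\min(w)\ge i_2>\ell$ for every $w\in\mathcal{L}_t^f(u/x_1)$). Hence $\beta_{i-1}(P\cap Q)=\beta_{i-1}(Y)=\sum_{j\ge 1}\binom{\ell-1}{j}\beta_{i-j}(F)$; inserting the formula for $\beta_\bullet(F)$ and combining with $\beta_i(P)$ and $\beta_i(Q)$ produces exactly the right-hand side of (b), since $\ell=\min(v)$ and the range $j=0,\dots,i$ merely absorbs vanishing binomials. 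I expect the only delicate points to be the two index shifts incurred when passing to $K[x_2,\dots,x_n]$ and the bookkeeping in the recursion; everything else is a direct application of results already established.
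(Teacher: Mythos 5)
Your proposal is correct, and it follows the paper's proof almost step for step: the same Betti splitting $I=P+Q$ from Corollary \ref{Cor:IlinResNotCompletely}, the same identification of $G(Q)$ with $\{w\in\mathcal{L}_t(u,v):\min(w)\ge 2\}$, the same index shifts when evaluating (\ref{eq1}) and (\ref{eq2}) in $K[x_2,\dots,x_n]$, and the same reduction of everything to $\beta_{i-1}(P\cap Q)$. The one place you genuinely diverge is in evaluating that last term. The paper rewrites $P\cap Q=(x_2,\dots,x_{\min(v)})P$, observes that the two factors involve disjoint sets of variables, and invokes the product formula for Betti numbers \cite[Proposition 3.1]{HRR} to get $\beta_{i-1}(P\cap Q)=\sum_{k}\binom{\ell-1}{k+1}\beta_{i-1-k}(P)$ in one line. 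You instead reuse the nested splittings $Y=Y_1+Y_2$ with $Y_1\cap Y_2=x_\ell Y_1$ already established in the proof of Corollary \ref{Cor:IlinResi2>1+tNotCom}, set up the recursion $\beta_m(Y^{(\ell)})=\beta_m(Y^{(\ell-1)})+\beta_m(F)+\beta_{m-1}(Y^{(\ell-1)})$, and solve it by induction via Pascal's rule to reach the same identity $\beta_i(P)+\beta_{i-1}(P\cap Q)=\sum_{j=0}^{i}\binom{\ell-1}{j}\beta_{i-j}(F)$; your induction checks out (base case $\ell=2$ gives $\beta_m(x_2F)=\beta_m(F)$, and the step combines $\binom{\ell-2}{j}+\binom{\ell-2}{j-1}=\binom{\ell-1}{j}$ correctly). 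What your route buys is self-containment: it avoids the external reference to \cite{HRR} and derives the Künneth-type formula in this special case from machinery already present in Section \ref{sec2}; what it costs is an extra induction where the paper has a citation. Both arguments rely on the same underlying fact, namely that $x_2,\dots,x_\ell$ do not divide any generator of $F=(\mathcal{L}_t^f(u/x_1))$ because $\min(w)\ge i_2\ge\ell+t>\ell$ for all $w\in G(F)$, which you state explicitly.
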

 \begin{proof}
 	(a) See Theorem \ref{thm:compltspreadlex}(iii). \\
 	(b) As shown in Corollary \ref{Cor:IlinResNotCompletely}, $I=P+Q$ is a Betti splitting with $P=(\mathcal{L}_t(u,x_1x_{n-(d-2)t}$ $\cdots x_{n-t}x_{n}))$ $=x_1(\mathcal{L}_t^f(u/x_{1}))$ and $Q=(\mathcal{L}_t(x_2x_{2+t}\cdots x_{2+(d-1)t},v))$. Moreover, it follows from Corollary \ref{Prop:BettiBol} and the proof of Corollary \ref{Cor:IlinResi2>1+tNotCom} that
 	$P\cap Q=x_1Y$ has a $(d+1)$--linear resolution, where $Y=(\mathcal{L}_t^f(u/x_1))\cap Q=\textstyle\sum\limits_{i=2}^{\min(v)}x_i(\mathcal{L}_t^f(u/x_1))$. 
 	
 	Furthermore, from Corollary \ref{Cor:IlinResi2>1+tNotCom} and (\ref{eq:DecompY}), we have 
 	\begin{equation}\label{eq:Betti}
 	P\cap Q=x_1(x_2,\dots,x_{\min(v)})(\mathcal{L}_t^f(u/x_1))=(x_2,\dots,x_{\min(v)})P.
 	\end{equation} 
 	Indeed, $\supp(P)\cap\{2,\dots,{\min(v)}\}=\emptyset$, $(x_2,\dots,x_{\min(v)})\cap P=(x_2,\dots,x_{\min(v)})P$, where $\supp(P)=\bigcup_{w\in G(P)}\supp(w)$. Since $I=P+Q$ is a Betti splitting, then
 	$$\beta_{i}(I)=\beta_{i}(P)+\beta_{i}(Q)+\beta_{i-1}(P\cap Q),$$ 
 	for every $i\ge 0$. On the other hand, $P=x_1(\mathcal{L}_t^f(u/x_1))$ and so $\beta_{i}(P)=\beta_{i}((\mathcal{L}_t^f(u/x_1)))$. Moreover, $(\mathcal{L}_t^f(u/x_1))$ is a $t$--spread strongly stable ideal of the polynomial ring $S$ with the order on the variables reversed ($x_n>x_{n-1}>\dots>x_1$), whereas $Q$ is a $t$--spread strongly stable ideal of the polynomial ring $K[x_2,\dots,x_n]$ in $n-1$ variables with the usual order on the variables $x_2>x_3>\dots>x_n$. Hence, in order to compute $\beta_i(Q)$, we must substitute in (\ref{eq1}) $\max(w)$ with $\max(w)-1$, for every $w\in G(Q)$; whereas when we compute $\beta_i((\mathcal{L}_t^f(u/x_1)))$ by formula (\ref{eq2}), we must take into account that $(\mathcal{L}_t^f(u/x_1))$ is equigenerated in degree $d-1$. 
 	
 	Finally, setting $\ell=\min(v)$ and applying \cite[Proposition 3.1]{HRR} (see, also, \cite[Corollary 3.5]{HG}), from (\ref{eq:Betti}), we have:
 	\begin{align*}
 	\beta_{i-1}(P\cap Q)\ &=\ \sum_{k=0}^{i-1}\beta_{j}((x_2,\dots,x_\ell))\cdot\beta_{i-1-k}(P)\\
 	&=\ \sum_{k=0}^{i-1}\binom{\ell-1}{k+1}\beta_{i-k-1}(P).
 	\end{align*}
 	If we set $j=k+1$, then $j=1,\dots,i$ and we can write
 	\begin{align*}
 	\beta_i(P)+\beta_{i-1}(P\cap Q)\ &=\ \beta_i(P)+\sum_{j=1}^{i}\binom{\ell-1}{j}\beta_{i-j}(P)\\
 	&=\ \sum_{j=0}^i\binom{\ell-1}{j}\beta_{i-j}(P)\\
 	&=\ \sum_{j=0}^{i}\sum_{w\in\mathcal{L}_t^f(u/x_1)}\binom{\min(v)-1}{j}\binom{n-\min(w)-(d-2)t}{i-j}.
 	\end{align*}
 	Condition (b) follows.
 \end{proof}

 We conclude the section with our last application which is related to the determination of incompletely $t$--spread lexsegment ideals which have linear quotients.
 
 Recall that a monomial ideal $I\subset S$ has linear quotients if there exists an ordering $u_1>u_2>\dots>u_m$ of its minimal generating set $G(I)$ such that  the colon ideals $(u_1,\dots,u_{k-1}):u_k$ are generated by a subset of the set of the variables $\{x_1, \ldots, x_n\}$, for all $k=2,\dots,m$. If $I$ is a monomial ideal equigenerated in degree $d$ and $I$ has linear quotients, then $I$ has a $d$--linear resolution \cite[Proposition 8.2.1]{JT}. The converse is not true. However, if $I$ is a monomial ideal generated in degree $2$, then $I$ has linear quotients if and only if $I$ has a linear resolution \cite[Theorem 10.2.6]{JT}. Thus, one can ask for what other classes of monomial ideals these properties are equivalent.
 
 In the following, we show that the aforementioned properties are equivalent for all incompletely $t$--spread lexsegment ideals. 
 \begin{Theorem}\label{thm:App2}
 	Let $I$ be an incompletely $t$--spread lexsegment ideal generated in degree $d$. Then $I$ has linear quotients if and only if $I$ has a linear resolution.
 \end{Theorem}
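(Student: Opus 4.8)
The implication "linear quotients $\Rightarrow$ $d$--linear resolution" holds for every monomial ideal equigenerated in degree $d$ \cite[Proposition 8.2.1]{JT}, so the plan is to prove the converse. First I would carry out the standard reductions recalled at the beginning of Section \ref{sec2}. Besides preserving linear resolutions and the property of being incompletely $t$--spread lexsegment, these reductions also preserve linear quotients: passing from $I$ to $I\cap K[x_{\min(u)},\dots,x_n]$ leaves $G(I)$ and all the colon ideals involved unchanged, passing from $I$ to $I''$ amounts to dividing every generator by a common variable (replacing each $(u_1,\dots,u_{k-1}):u_k$ by an identical ideal), and a principal ideal, or an ideal generated by variables, trivially has linear quotients. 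Hence one may assume we are in the situation of Theorem \ref{teor:InotCompLinRes}, so that $u=x_{i_1}\cdots x_{i_d}$, $v=x_{j_1}\cdots x_{j_d}$ with $i_1=1<j_1=:\ell$; by that theorem, $2\le\ell<n-(d-1)t$, $v=x_\ell x_{n-(d-2)t}\cdots x_{n-t}x_n$, and $u<_{\slex}x_1x_{\ell+1+t}\cdots x_{\ell+1+(d-1)t}$, the last condition giving $i_2\ge\ell+1+t$ upon comparing second entries.

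Next I would use the Betti splitting $I=P+Q$ of Corollary \ref{Cor:IlinResNotCompletely}, where $P=x_1(\mathcal{L}_t^f(u/x_1))$ and $Q=(\mathcal{L}_t(x_2x_{2+t}\cdots x_{2+(d-1)t},v))$, noting that $G(P)=\{x_1z:z\in\mathcal{L}_t^f(u/x_1)\}$ and, because $v$ is the $\le_{\slex}$--smallest $t$--spread monomial of degree $d$ with minimum $\ell$, that $G(Q)=\{w\in M_{n,d,t}:2\le\min(w)\le\ell\}$. Both factors have linear quotients: $Q$ is an initial $t$--spread lexsegment ideal of $K[x_2,\dots,x_n]$, hence $t$--spread strongly stable, and $(\mathcal{L}_t^f(u/x_1))$ is a final $t$--spread lexsegment ideal of $K[x_2,\dots,x_n]$, hence $t$--spread strongly stable for the reversed order; $t$--spread strongly stable ideals have linear quotients \cite{EHQ}, and multiplying every generator of $(\mathcal{L}_t^f(u/x_1))$ by $x_1$ does not affect this, so $P$ has linear quotients as well. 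The strategy is then to build a linear--quotient order of $G(I)$ by concatenating a linear--quotient order of $G(Q)$ with one of $G(P)$, listing $G(Q)$ first. This works as soon as $Q:p$ is generated by variables for every $p\in G(P)$: the colon of a generator of $G(Q)$ against its predecessors is then the corresponding colon ideal of $Q$, while the colon of $p\in G(P)$ against its predecessors equals $(Q:p)+\big((\text{earlier generators of }G(P)):p\big)$, a sum of ideals generated by variables.

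So the key step is the claim that $Q:p=(x_2,\dots,x_\ell)$ for every $p\in G(P)$. Writing $p=x_1z$ with $z\in\mathcal{L}_t^f(u/x_1)$, the inequality $\min(z)\ge i_2\ge\ell+1+t$ forces $\supp(z)\subseteq\{\ell+1+t,\dots,n\}$, hence $\supp(z)\cap\{2,\dots,\ell\}=\emptyset$. Since every generator of $Q$ lies in $K[x_2,\dots,x_n]$ one checks that $Q:(x_1z)=Q:z$; for $2\le j\le\ell$ the monomial $x_jz$ is $t$--spread (as $\min(z)-j\ge(\ell+1+t)-\ell>t$), of degree $d$, and of minimum $j$, hence lies in $Q$, giving $(x_2,\dots,x_\ell)\subseteq Q:z$; conversely, if $fz\in Q$ then $fz$ is a multiple of some $w\in G(Q)$ with $\min(w)\le\ell<\min(z)$, so $\min(w)\in\supp(f)$ and thus $f\in(x_2,\dots,x_\ell)$. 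Granting the claim, the concatenated order described above exhibits linear quotients for $I$, completing the proof.

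The argument is elementary once one has hit on the right order, so the main obstacle is bookkeeping rather than conceptual depth: one must confirm that every reduction step preserves linear quotients (and incompleteness), and one must pin down $Q:p$ exactly. It is precisely here that the sharp bound $i_2\ge\ell+1+t$ --- equivalently, the hypothesis $u<_{\slex}x_1x_{\ell+1+t}\cdots x_{\ell+1+(d-1)t}$ from Theorem \ref{teor:InotCompLinRes} --- is indispensable: if $\supp(z)$ were allowed to meet $\{2,\dots,\ell\}$, then $Q:p$ would in general fail to be generated by variables and the concatenation strategy would break down.
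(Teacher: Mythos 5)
Your proposal is correct and follows essentially the same route as the paper: the same Betti splitting $I=P+Q$ with $P=x_1(\mathcal{L}_t^f(u/x_1))$ and $Q$ the initial piece, the same concatenated linear-quotient order with $G(Q)$ listed first, and the same key computation $Q:p=(x_2,\dots,x_\ell)$ for $p\in G(P)$. The only (harmless) differences are that you verify the upper bound $Q:p\subseteq(x_2,\dots,x_\ell)$ by a direct divisibility argument rather than via $P\cap Q=(x_2,\dots,x_\ell)P$, and that you spell out more explicitly than the paper that the preliminary reductions preserve linear quotients.
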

 \begin{proof}
 	Since $I$ is generated in degree $d$, then if $I$ has linear quotients, it follows that $I$ has a linear resolution. Conversely, suppose $I=(\mathcal{L}_t(u,v))$ has a linear resolution, $\min(u)=1$ and $\min(v)>1$. By Theorem \ref{teor:InotCompLinRes}, we have
 	$u<x_1x_{\ell+1+t}\cdots x_{\ell+1+(d-1)t}$ and $v=x_{\ell}x_{n-(d-2)t}\cdots x_{n-t}x_n,
 	$
 	for some $2\le\ell<n-(d-1)t$. Moreover, from the proof of Corollary \ref{Cor:IlinResi2>1+tNotCom}, if we set $P=(\mathcal{L}_t(u,x_1x_{n-(d-2)t}\cdots x_{n-t}x_{n}))=x_1(\mathcal{L}_t^f(u/x_{1}))$ and $Q=(\mathcal{L}_t(x_2x_{2+t}\cdots x_{2+(d-1)t},v))$, then $P\cap Q=(x_2,\dots,x_\ell)P$.\\
 	Following \cite{EOS}, we order the generators of $I$ in a suitable way. Firstly, note that $G(I)$ is the disjoint union of $G(P)$ and $G(Q)$. Let $>_{\lex}$ be the lexicographic order corresponding to $x_1>x_2>\dots>x_n$ and let $>_{\overline{\lex}}$ be the lexicographic order corresponding to $x_n>x_{n-1}>\dots>x_1$. We order $G(Q)=\{w_1>_{\lex}w_2>_{\lex}\dots>_{\lex}w_p\}$ decreasingly with respect to $>_{\lex}$ and $G(P)=\{z_1>_{\overline{\lex}}z_2>_{\overline{\lex}}\cdots>_{\overline{\lex}}z_q\}$ decreasingly with respect to $>_{\overline{\lex}}$. Then, we order $G(I)$ as follows:
 	\[
 	w_1>_{\lex} w_2>_{\lex} \cdots >_{\lex} w_p >_{\overline{\lex}} z_1>_{\overline{\lex}} z_2>_{\overline{\lex}} \cdots  >_{\overline{\lex}} z_q.
 	\]
 	In order to simplify the notation, we will denote such an order by $\succ$. Thus
 	\begin{equation}\label{eq:orderingG(I)G(P)G(Q)}
 	w_1\succ w_2\succ\dots\succ w_p\succ z_1\succ z_2\succ\cdots\succ z_q.
 	\end{equation}
 	The colon ideal $(w_1,\dots,w_{k-1}):w_k$ is generated by variables, for all $k=2,\dots,p$. Indeed, $Q$ is an initial $t$--spread lexsegment ideal in $K[x_2,\dots,x_n]$. Thus, it is also a $t$--spread strongly stable ideal and so it has linear quotients with respect to the ordering $w_1\succ w_2\succ\dots\succ w_p$ \cite[Theorem 1.4]{EHQ}. If one take $k\in\{1,\dots,q\}$, then
 	\begin{align*}
 	(w_1,\dots,w_{p},z_1,\dots,z_{k-1}):z_{k}\ &=\ (w_1,\dots,w_{p}):z_{k}+(z_1,\dots,z_{k-1}):z_{k}\\&=\ Q:z_{k}+(z_1,\dots,z_{k-1}):z_{k}.
 	\end{align*}
 	Since $P$ can be seen as a $t$--spread strongly stable ideal but with the order on the variables reversed ($x_n>x_{n-1}>\dots>x_1$), $P$ has linear quotients with respect to the ordering $z_1\succ z_2\succ\cdots\succ z_q$  \cite[Theorem 1.4]{EHQ}. Thus, $(z_1,\dots,z_{k-1}):z_{k}$ is generated by variables. Thus, if we show that $Q:z_k$ is generated by variables too, we can conclude that also $(w_1,\dots,w_{p},z_1,\dots,z_{k-1}):z_{k}$ is generated by variables, as desired. \\
 	Let us prove that $Q:z_k = (x_2, \ldots, x_\ell)$. Firstly, one can observe that
 	$$
 	z_k(Q:z_{k})=Q\cap(z_k)\subseteq Q\cap P=(x_2,\dots,x_\ell)P.
 	$$
 	Let $y\in Q:z_{k}$. Then, $yz_k\in(x_2,\dots,x_\ell)P$ and $x_jz_{s}$ divides $yz_{k}$, for some $j\in\{2,\dots,\ell\}$ and some $s\in\{1,\dots,q\}$. Since $z_{k}\le_{\slex} u<_{\slex}x_{1}x_{\ell+1+t}\cdots x_{\ell+1+(d-1)t}$, then  $x_j$ does not divide $z_{k}$. Hence, $x_j$ divides $y$. This shows that $Q:z_{k}\subseteq (x_2,\dots,x_{\ell})$. Conversely, we prove that $(x_2,\dots,x_{\ell})\subseteq Q:z_{k}$. Indeed, if one picks $x_j\in(x_2,\dots,x_\ell)$, then $x_jz_{k}\in Q$. In fact, since $z_{k}\le_{\lex} u<_{\slex}x_{1}x_{\ell+1+t}\cdots x_{\ell+1+(d-1)t}$ and $\min(z_{k})=1$, we have that $\min(z_k/x_1)\ge\ell+1+t$. Thus $\min(z_k/x_1)-j>t$ and $x_j(z_{k}/x_1)$ is $t$--spread. Furthermore,  $x_j(z_{k}/x_1)\in \mathcal{L}_t(x_2x_{2+t}\cdots x_{2+(d-1)t},v) = G(Q)$. Thus $x_1x_j(z_{k}/x_1)=x_jz_{k}\in Q$. The assertion follows.
 \end{proof}
 
 \begin{Remark}
 	\rm Let $I=(\mathcal{L}_t(u,v))=P+Q$ be an incompletely $t$--spread lexsegment ideal with $P$ and $Q$ as in Theorem \ref{thm:App2}. Let $G(Q)=\{w_1>_{\lex}w_2>_{\lex}\dots>_{\lex}w_p\}$ ordered decreasingly with respect to $>_{\lex}$ and $G(P)=\{z_1>_{\overline{\lex}}z_2>_{\overline{\lex}}\cdots>_{\overline{\lex}}z_q\}$ ordered decreasingly with respect to $>_{\overline{\lex}}$. Define
 	\begin{align*}
 	\set_Q(w_k)&=\{x_i:x_i\in (w_1,\dots,w_{k-1}):w_k\},\\
 	\set_P(z_k)&=\{x_i:x_i\in (z_1,\dots,z_{k-1}):z_k\}.
 	\end{align*}
 	Setting 
 	\begin{align*}
 	\set_I(w_k)&=\{x_i:x_i\in (w_1,\dots,w_{k-1}):w_k\},\\
 	\set_I(z_k)&=\{x_i:x_i\in (w_1,\dots,w_{p},z_1,\dots,z_{k-1}):z_k\},
 	\end{align*}
 	Theorem \ref{thm:App2} points out that if we order the monomials of $G(I)$ as in (\ref{eq:orderingG(I)G(P)G(Q)}), then $\set_I(w_k)=\set_Q(w_k)$ and $\set_I(z_k)=\{x_2,\dots,x_\ell\}\cup\set_P(z_k)$, where the union is disjoint.\\
 	By \cite[Corollary 8.2.2]{JT},
 	\begin{equation}\label{eq:set}
 	\beta_{i}(I)=\sum_{y\in G(I)}\binom{|\set_I(y)|}{i}.
 	\end{equation}
 	We claim that we can recover the formula given in Theorem \ref{thm:Bettiformula}(b).
 	Indeed, from formulas (\ref{eq1}) and (\ref{eq2}), we have that $|\set_Q(w_k)|=\max(w_k)-2-(d-1)t$ and $|\set_P(z_k)|=n-\min(z_k)-(d-2)t$. Hence,
 	\begin{align*}
 	\beta_i(I)&=\sum_{y\in G(I)}\binom{|\set_I(y)|}{i}=\sum_{w\in G(Q)}\binom{|\set_Q(w)|}{i}+\sum_{z\in G(P)}\binom{|\{x_2,\dots,x_{\ell}\}\cup\set_P(z)|}{i}\\[9pt]
 	&=\sum_{\substack{w\in\mathcal{L}_t(u,v)\\ \min(w)\ge2}}\!\!\binom{\max(w)-(d-1)t-2}{i}+\sum_{z\in G(P)}\sum_{j=0}^{i}\binom{|\{x_2,\dots,x_{\ell}\}|}{j}\binom{|\set_P(z)|}{i-j}\\[9pt]
 	&=\sum_{\substack{w\in\mathcal{L}_t(u,v)\\ \min(w)\ge2}}\!\!\binom{\max(w)-(d-1)t-2}{i}\!+\!\!\!\!\!\!\sum_{\substack{j=0,\ldots,i\\w\in\mathcal{L}_t^f(u/x_1)}}\!\!\!\!\binom{\ell-1}{j}\binom{n-\min(w)-(d-2)t}{i-j},
 	\end{align*}
 	which coincides with the formula given in Theorem \ref{thm:Bettiformula}(b).
 \end{Remark}
 
 We illustrate the previous results of this section with an example.
 \begin{Example}
 	\rm Let $S=K[x_1,\dots,x_8]$ and consider the $2$--spread lexsegment ideal $I=(\mathcal{L}_t(x_1x_5x_8,x_3x_6x_8))$ of $S$. Then
 	\[
 	I = (x_1x_5x_8,x_1x_6x_8,x_2x_4x_6,x_2x_4x_7,x_2x_4x_8,x_2x_5x_7,x_2x_5x_8,x_2x_6x_8)
 	\]
 	and it is an incompletely $2$--spread lexsegment ideal with a linear resolution (Theorem \ref{thm:compltspreadlex}). Indeed, setting $u=x_1x_5x_8$ and $v=x_2x_6x_8$, $n=8$, $d=3$, $t=2$, the monomials $w\in M_{8,3,2}$ with $w<_{\slex}v$ are $x_3x_5x_7,x_3x_5x_8,x_3x_6x_8,x_4x_6x_8$. Consider the monomial $w=x_3x_5x_7$. Note that $x_1w/x_s>_{\slex}u$, for all $s\in\supp(w)$ and thus $I$ is an incompletely $2$--spread lexsegment ideal. With the same notations as in Theorem \ref{teor:InotCompLinRes}, let $\ell=\min(v)=2$. Then $\ell\in\{2,\dots,n-(d-1)t-1\}=\{2,3\}$, $v=x_{\ell}x_{n-(d-2)t}\cdots x_{n}=x_2x_6x_8$ and $u<_{\slex}x_1x_{\ell+1+t}\cdots x_{\ell+1+(d-1)t}=x_1x_5x_7$. Hence, from Theorem \ref{teor:InotCompLinRes}, $I$ has a linear resolution. In fact, by \textit{Macaulay2} \cite{GDS}, 
 	\[0 \longrightarrow S(-5)^4 \longrightarrow S(-4)^{11} \longrightarrow S(-3)^{8} \longrightarrow I \longrightarrow 0,\]
 	is the minimal graded free resolution of $I$ and thus the Betti table of $I$ is the following one
 	$$\begin{matrix}
 	& 0 & 1 & 2\\
 	\text{total:}
 	& 8 & 11 & 4\\
 	3: & 8 & 11 & 4
 	\end{matrix}$$
 	Consider the order given in (\ref{eq:orderingG(I)G(P)G(Q)}), \emph{i.e.}, 
 	$$
 	x_2x_4x_6\succ x_2x_4x_7\succ x_2x_4x_8\succ x_2x_5x_7\succ x_2x_5x_8\succ x_2x_6x_8\succ x_1x_6x_8\succ x_1x_5x_8.
 	$$
 	Again, by \textit{Macaulay2} \cite{GDS}, one can verify that $I$ has linear quotients with respect to this order. Firstly, denote by $u_i$ the $i$--th monomial generator of $I$ which appears in the given ordering $\succ$ and set $I_{k-1}= (u_1, \ldots, u_{k-1})$. One has 
 	\[
 	\begin{tabular}{|l|l|}
 	\hline
 	\rule[-3mm]{0mm}{0.8cm}
 	$I_1:u_2=I_1:(x_2x_4x_7)= (x_6)$ &  $I_5:u_6 = I_5: (x_2x_6x_8)= (x_4, x_5)$ \\
 	\hline
 	\rule[-3mm]{0mm}{0.8cm}
 	$I_2:u_3 = I_2: (x_2x_4x_8)=(x_6, x_7)$  & $I_6:u_{7} = I_6: (x_1x_6x_8)= (x_2)$ \\
 	\hline
 	\rule[-3mm]{0mm}{0.8cm}
 	$I_3:u_4 = I_3: (x_2x_5x_7)= (x_4)$ & $I_{7}:u_{8} = I_{7}: (x_1x_5x_8)= (x_2, x_6)$ \\
 	\hline
 	\rule[-3mm]{0mm}{0.8cm}
 	$I_4:u_5 = I_4: (x_2x_5x_8)= (x_4, x_7)$    \\ 
 	\cline{1-1}
 	\end{tabular}
 	\]
 	\vspace{0,2cm}

 	In the next table we collect the relevant sets $\set_I(w)$, with $w\in G(I)=\mathcal{L}_t(u,v)$:
 	
 	\begin{center}
 		$\begin{array}{ccccccccc}
 		\bottomrule[1.05pt]
 		\rowcolor{black!20}&x_2x_4x_6&x_2x_4x_7&x_2x_4x_8&x_2x_5x_7&x_2x_5x_8&x_2x_6x_8&x_1x_6x_8&x_1x_5x_8\\
 		\toprule[1.05pt]\set&\emptyset&\{x_6\}&\{x_6,x_7\}&\{x_4\}&\{x_4,x_7\}&\{x_4,x_5\}&\{x_2\}&\{x_2,x_6\}\\
 		\toprule[1.05pt]
 		\end{array}$
 	\end{center}
 	
 	Let us verify that $\beta_1(I)=11$ by Theorem \ref{thm:Bettiformula}(b). We have
 	\begin{align*}
 	\beta_1(I)&=\sum_{\substack{w\in\mathcal{L}_t(u,v)\\ \min(w)\ge2}}\binom{\max(w)-6}{1}+\sum_{\substack{j=0,1\\w\in\mathcal{L}_t^f(u/x_1)}}\binom{1}{j}\binom{6-\min(w)}{1-j}\\
 	&=1\binom{0}{1}+2\binom{1}{1}+3\binom{2}{1}+\binom{1}{0}\binom{0}{1}+\binom{1}{1}\binom{0}{0}\\
 	&\phantom{=}+\binom{1}{0}\binom{1}{1}+\binom{1}{1}\binom{1}{1}\\
 	&=2+3\cdot2+1+1+1=11.
 	\end{align*}
 	Finally, we verify that $\beta_2(I)=4$.
 	We have
 	\begin{align*}
 	\beta_2(I) &=\sum_{\substack{w\in\mathcal{L}_t(u,v)\\ \min(w)\ge2}}\binom{\max(w)-6}{2}+\sum_{\substack{j=0,1,2\\w\in\mathcal{L}_t^f(u/x_1)}}\binom{1}{j}\binom{6-\min(w)}{2-j}\\
 	&=1\binom{0}{2}+2\binom{1}{2}+3\binom{2}{2}+\binom{1}{0}\binom{0}{2}+\binom{1}{1}\binom{0}{1}\\
 	&\phantom{=}+\binom{1}{2}\binom{0}{0}+\binom{1}{0}\binom{1}{2}+\binom{1}{1}\binom{1}{1}+\binom{1}{2}\binom{1}{0}\\
 	&=3+1=4.
 	\end{align*}
 \end{Example}
 
 \section{Conclusion and Perspectives}
 
 In this article we have investigated $t$--spread lexsegment ideals with linear resolution. In particular, we have focused on incompletely $t$--spread lexsegment ideals. We do not know yet if the corresponding result of Theorem \ref{thm:App2} holds true for completely $t$--spread lexsegment ideals as in the case $t=0$ \cite[Theorem 1.2]{EOS}. Thus, we are led to the next question.
 \begin{Question}
 	Let $I$ be a completely $t$--spread lexsegment ideal with a linear resolution. Is it true that $I$ has linear quotients?
 \end{Question}
 In \cite{CFI1}, in order to study the Cohen--Macaulay property for $t$--spread lexsegment ideals, some formulas for the depth and the Krull dimension have been stated. To the best of our knowledge, it seems that closed formulas for the projective dimension and the regularity of $t$--spread lexsegment ideals are not known also in the $0$--spread case. Some partial results are given in \cite{EOT2010}. As we have underlined in the article, there is always a natural Betti splitting $I=P+Q$ of a $t$--spread lexsegment ideal $I$. Thus, using the following formulas \cite[Corollary 2.2]{FHT2009},
 \begin{align*}
 \pd(I)&=\max\{\pd(P),\pd(Q),\pd(P\cap Q)+1\},\\
 \reg(I)&=\max\{\reg(P),\reg(Q),\reg(P\cap Q)-1\},
 \end{align*}
 one could try to determine closed formulas for such invariants.\\\\
 \emph{Acknowledgement}. We thank the referee for his/her helpful suggestions that allowed us to improve the quality of the paper.

 \end{document}